\documentclass[12pt]{amsart}
\usepackage{amssymb}

\usepackage[all]{xy}
%\input xy
%\xyoption{all}  

\theoremstyle{plain}
\newtheorem{prop}{Proposition}
\newtheorem{thm}[prop]{Theorem}
\newtheorem{coro}[prop]{Corollary}
\newtheorem{lemm}[prop]{Lemma}
\newtheorem{conj}[prop]{Conjecture}

\newtheorem*{thm-int}{Theorem}
\theoremstyle{remark}
\newtheorem{exam}[prop]{Example}
\newtheorem{rema}[prop]{Remark}
\newtheorem{ques}[prop]{Question}

\theoremstyle{definition}

%\numberwithin{equation}{section}

%\usepackage[notref,notcite]{showkeys}

   % abel. pro-l-Galois group 
   % decomposition group
   % inertia groups

%\value group
%valuations

%\residue fields

%\def\sK{{\hbox{\newfont{K}}}}

\def\Aut{{\rm Aut}}

\def\no{\noindent}

\def\cM{{\mathcal M}}

\def\lra{\longrightarrow}
\def\ra{\rightarrow}

\def\C{{\mathbb C}}

\def\F{{\mathbb F}}

\def\P{{\mathbb P}}
\def\Q{{\mathbb Q}}

\def\Z{{\mathbb Z}}
\def\C{{\mathbb C}}
\def\N{{\mathbb N}}

\def\bP{{\mathbb P}}

\def\SL{{\rm SL}}
\def\GL{{\rm GL}}
\def\PGL{{\rm PGL}}
\def\Sym{{\rm Sym}}

\newenvironment{dedication}
  {%\clearpage           % we want a new page
   \thispagestyle{empty}% no header and footer
   %\vspace*{\stretch{1}}% some space at the top 
   \itshape             % the text is in italics
   \raggedleft          % flush to the right margin
  }
  {\par % end the paragraph
   %\vspace{\stretch{3}} % space at bottom is three times that at the top
   
  %\clearpage           % finish off the page
  }

\begin{document}
\title[Torsion points of elliptic curves]{Torsion of elliptic curves and unlikely intersections}

\author{Fedor Bogomolov}
\address{Courant Institute of Mathematical Sciences, N.Y.U. \\
 251 Mercer str. \\
 New York, NY 10012, U.S.A.}
\address{National Research University Higher School of Economics, 
Russian Federation \\
AG Laboratory, HSE \\ 
7 Vavilova str., Moscow, Russia, 117312}

\email{bogomolo@cims.nyu.edu}

\author{Hang Fu}
\address{Courant Institute of Mathematical Sciences, N.Y.U. \\
 251 Mercer str. \\
 New York, NY 10012, U.S.A.}
\email{fu@cims.nyu.edu}

\author{Yuri Tschinkel}
\address{Courant Institute of Mathematical Sciences, N.Y.U. \\
 251 Mercer str. \\
 New York, NY 10012, U.S.A.}
\address{Simons Foundation\\
160 Fifth Av. \\
New York, NY 10010, U.S.A.}
\email{tschinkel@cims.nyu.edu}        

\keywords{Elliptic curves, torsion points, fields}

\begin{abstract}
We study effective versions of unlikely intersections of images of 
torsion points of elliptic curves on the projective line.  
\end{abstract}

\maketitle

\begin{dedication}
To Nigel Hitchin, with admiration. 
\end{dedication}

\setcounter{section}{0}       
\section*{Introduction}
\label{sect:introduction}

Let $k$ be a field of characteristic $\neq 2$ and $\bar{k}/k$ an algebraic closure of $k$.  
Let $E$ be an elliptic curve over $k$, presented as a double cover 
$$
\pi: E\ra \P^1,
$$ 
ramified in 4 points, and 
$E[\infty]\subset E(\bar{k})$ the set of its torsion points.  In \cite{BT-small} we proved:  

\begin{thm}
If $E_1, E_2$ are nonisomorphic elliptic curves over $\bar{\Q}$, then 
$$
\pi_1(E_1[\infty]) \cap \pi_2(E_2[\infty])
$$
is finite. 
\end{thm}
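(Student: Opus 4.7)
The plan is to reduce to Raynaud's theorem (the Manin--Mumford conjecture) applied to a curve in the abelian surface $E_1\times E_2$. Let $B_i \subset \P^1$ be the $4$-point branch locus of $\pi_i$, and form the fiber product $Z := E_1 \times_{\P^1} E_2$, regarded as a closed subscheme of $E_1 \times E_2$. If $x \in \pi_1(E_1[\infty]) \cap \pi_2(E_2[\infty])$, then every lift $(P_1,P_2)$ with $\pi_i(P_i) = x$ and $P_i \in E_i[\infty]$ is a torsion point of $E_1\times E_2$ lying on $Z$; there are at most four such lifts per $x$, so it suffices to show that $Z$ contains only finitely many torsion points of $E_1\times E_2$. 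By Raynaud's theorem this follows once $Z$ is shown to be an irreducible curve of geometric genus $\geq 2$.

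For irreducibility, as long as $B_1 \neq B_2$ the degree-$4$ cover $Z \to \P^1$ has Galois group $(\Z/2)^2$ and $Z$ is irreducible; a double cover of $\P^1$ being determined up to isomorphism by its branch locus, the hypothesis $E_1 \not\cong E_2$ forces $B_1 \neq B_2$. For the genus, let $\tilde Z \to Z$ be the normalization, which resolves a single node of $Z$ lying above each point of $B_1 \cap B_2$. The projection $\tilde Z \to E_1$ is then a double cover ramified exactly at $\pi_1^{-1}(B_2 \setminus B_1)$, a set of $2(4-n)$ points where $n := |B_1\cap B_2|$, and Riemann--Hurwitz gives $g(\tilde Z) = 5 - n$. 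Since $B_1 \neq B_2$ forces $n \leq 3$, the geometric genus of $Z$ is at least $2$; in particular $Z$ is not a torsion translate of any positive-dimensional abelian subvariety of $E_1\times E_2$ (such a subvariety being an elliptic subgroup, of geometric genus $1$), and Raynaud's theorem gives that $Z \cap (E_1\times E_2)_{\mathrm{tors}}$ is finite.

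The principal nontrivial input is Raynaud's theorem itself; the remainder of the argument is routine fiber-product geometry. The one subtlety worth flagging is the case in which $E_1$ and $E_2$ are isogenous but not isomorphic, where $E_1\times E_2$ carries additional elliptic subgroups (graphs of isogenies) beyond $E_1 \times \{0\}$ and $\{0\} \times E_2$ --- the genus-$\geq 2$ bound on $Z$ handles these uniformly, since a curve of geometric genus $\geq 2$ cannot be contained in a translate of a curve of genus $1$.
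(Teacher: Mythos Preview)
Your argument is correct and follows essentially the same approach as the paper: form the fiber product $C\subset E_1\times E_2$ over the diagonal in $\P^1\times\P^1$, observe that $r_1\neq r_2$ forces $C$ to be an irreducible curve of genus $\ge 2$, and invoke Raynaud's theorem. Your genus computation $g(\tilde Z)=5-n$ with $n=|B_1\cap B_2|\le 3$ makes explicit the case analysis the paper sketches in Section~\ref{sect:geom}, and your remark about isogenous pairs (where $E_1\times E_2$ has extra elliptic subgroups) is a nice clarification of why the genus bound, rather than any coarser argument, is what makes Raynaud apply.
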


Here, we explore effective versions of this theorem, specifically, 
the size and structure of such intersections (see \cite{zannier} for
an extensive study of related problems).  
We expect the following universal bound:

 \begin{conj}[Effective Finiteness--EFC-I]
 \label{conj:main}
 There exists a constant $c>0$ such that 
 for every pair of nonisomorphic elliptic curves $E_1,E_2$  over $\C$ we have
 $$
  \pi_1(E_1[\infty])\cap \pi_2(E_2[\infty])<c.
 $$
 \end{conj}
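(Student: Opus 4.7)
The plan is to reduce the effective intersection problem to a uniform Manin--Mumford bound for curves in abelian surfaces. Set $A := E_1 \times E_2$ and define the curve
$$
C \;:=\; (\pi_1 \times \pi_2)^{-1}(\Delta) \;\subset\; A,
$$
where $\Delta \subset \P^1 \times \P^1$ is the diagonal. Each $x \in \pi_1(E_1[\infty]) \cap \pi_2(E_2[\infty])$ lifts to at least one torsion pair $(P_1,P_2) \in C \cap A[\infty]$, and the natural projection $C \to \Delta \cong \P^1$ has degree $4$, so
$$
|\pi_1(E_1[\infty]) \cap \pi_2(E_2[\infty])| \;\leq\; |C \cap A[\infty]|.
$$
It therefore suffices to bound $|C \cap A[\infty]|$ by an absolute constant.

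First I would study the geometry of $C$. The cover $C \to \P^1$ is branched only above branch points of $\pi_1$ or $\pi_2$. Let $a$ denote the number of common branch values; then $a \leq 3$, since $a = 4$ would force $E_1 \cong E_2$. A direct Riemann--Hurwitz computation (yielding total ramification degree $16-a$) gives that every irreducible component of $C$ has geometric genus at most $5$, a bound independent of the pair $(E_1,E_2)$.

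Next I would verify that no component of $C$ is a torsion translate of a proper abelian subvariety of $A$. Both coordinate projections $C \to E_i$ are surjective, so no component lies in a horizontal or vertical fiber. When $E_1$ and $E_2$ are non-isogenous these fibers are the only proper abelian subvarieties, and Raynaud's theorem yields finiteness of $C \cap A[\infty]$. In the isogenous (but not isomorphic) case one must additionally rule out that a component of $C$ is a torsion translate of the graph of an isogeny $E_1 \to E_2$; this is essentially the content of \cite{BT-small}. To promote finiteness to a uniform bound, I would invoke the uniform Manin--Mumford estimate for curves in abelian surfaces, a consequence of K\"uhne's uniform Bogomolov theorem and the Dimitrov--Gao--Habegger framework for uniform Mordell. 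This furnishes a bound depending only on $\dim A = 2$ and the genus bound $g(C) \leq 5$, hence an absolute constant $c$ over $\bar{\Q}$. A standard specialization argument over the moduli space of pairs $(E_1,E_2)$ transfers the bound to $\C$.

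The principal obstacle is the use of uniform Manin--Mumford across the entire family of abelian surfaces $E_1 \times E_2$: the existing theorems, while qualitatively sufficient, produce enormous constants, whereas a meaningful value of $c$ for the conjecture should be of modest size. A secondary difficulty is the careful analysis of the isogenous case, where the graphs of isogenies inside $A$ compete geometrically with $C$ and the CM subcase appears to demand separate treatment, perhaps via a direct argument exploiting the $\langle[-1]\rangle^2$-symmetry of $C$ and the action of endomorphism rings on torsion.
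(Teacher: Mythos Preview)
The statement you are addressing is labeled \emph{Conjecture} in the paper (EFC-I), and the paper does \emph{not} prove it. Section~\ref{sect:geom} constructs exactly the curve $C=(\pi_1\times\pi_2)^{-1}(\Delta)\subset E_1\times E_2$ that you describe, records the self-intersection $C^2=8$ and the genus stratification $g(C)\in\{2,3,4,5\}$ according to $|r\cap r'|$, and then invokes Raynaud's theorem to recover the \emph{non-uniform} finiteness already proved in \cite{BT-small}. No uniform bound is attempted; the effective statement is left open, and the remainder of the paper gathers evidence and examples.

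Your proposal therefore is not a variant of the paper's argument but an outline of how to settle the conjecture using input that postdates the paper. Two remarks on the outline itself. First, the Riemann--Hurwitz count should give total ramification $16-2a$, not $16-a$, so that $g(\tilde C)=5-a$; the bound $g\le 5$ survives. Second, your treatment of the isogenous case can be simplified: any one-dimensional abelian subvariety $B\subset E_1\times E_2$ projects to each $E_i$ with degree $0$ or $1$ in at least one factor, whereas both projections $C\to E_i$ have degree $2$; hence no component of $C$ is a translate of such a $B$, and since $g(C)\ge 2$ the torsion-coset exclusion is automatic without appealing to \cite{BT-small}.

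The genuine point of caution is the one you already flag: the uniform Manin--Mumford input you need is for curves in an arbitrary abelian \emph{surface}, not in their Jacobians. When $g(C)>2$ the natural map $J(C)\to E_1\times E_2$ has positive-dimensional kernel, so torsion in $E_1\times E_2$ does not pull back to torsion in $J(C)$, and the reduction to the Jacobian case is not immediate. One must invoke the Gao--Ge--K\"uhne form of uniform Mordell--Lang (for a curve generating an abelian variety, intersected with the rank-zero group $A_{\mathrm{tors}}$) rather than the DGH/K\"uhne statement for curves in Jacobians alone; you should make that dependence explicit.
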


We say that two subsets of the projective line
$$
S=\{ s_1,\ldots, s_n\},\quad S':= \{ s_1',\ldots, s_n'\} \subset \P^1(\bar{k})
$$
are projectively equivalent, and write $S\sim S'$, if there is a $\gamma\in \PGL_2(\bar{k})$
such that (modulo permutation of the indices) $s_i=\gamma(s_i')$, for all $i$.

Let $E$ be an elliptic curve over $k$, $e\in E$ the identity, and 
$$
\begin{array}{rcc}
E &  \stackrel{\iota}{\lra} & E\\
x & \mapsto & -x
\end{array}
$$
the standard involution. The corresponding quotient map
$$
\pi: E\ra E/\iota =\bP^1
$$
is ramified in the image of the 2-torsion points of $E(\bar{k})$. 
Conversely, for
$$
r:=\{ r_1,r_2,r_3, r_4\} \subset \bP^1(\bar{k}),
$$ 
the double cover 
$$
\pi_r: E_r\ra \bP^1
$$ 
with ramification in $r$ defines an elliptic curve; 
given another such $r'$, the curves $E_r$ and $E_{r'}$ are isomorphic (over $\bar{k})$ 
if and only if $r\sim r'$, in particular, the image of 2-torsion determines the elliptic curve, up to isomorphism.

Let $E_r[n]\subset E_r(\bar{k})$ be the set of elements of order {\em exactly} $n$, for $n\in \N$. 
The behavior of torsion points of other small orders is also
simple: 
$$
\pi_r(E_r[3])\sim \{ 1,\zeta_3,\zeta_3^2,\infty\}, 
$$ 
where $\zeta_3$ is a nontrivial third root of 1, and 
$$
\pi_r(E_r[4]) \sim \{ 0,1,-1,i,-i,\infty \}. 
$$ 
In particular, up to projective equivalence, these are {\em independent} of $E_r$.  
However, for all $n\ge 5$, 
the sets  $\pi_r(E_r[n])$, modulo $\PGL_2(\bar{k})$, do depend on $E_r$, and it is 
tempting to inquire into the nature of this dependence. 

In this note, we study $\pi_r(E_r[n])$, for varying curves $E_r$ and varying $n$. 
Our  goal is to establish effective and uniform finiteness results for intersections
$$
\pi_r(E_r[n])\cap \pi_{r'}(E_{r'}[n']), \quad n, n'\in \N,
$$
for elliptic curves $E_r,E_{r'}$, defined over $k$. 
We formulate several conjectures in this direction and provide evidence for them.

The next step is to ask: given elliptic curves $E_r,E_{r'}$ over $\bar{k}$, 
when is 
$$
r \subset \pi_{r'}(E_{r'}[\infty])?
$$ 
We modify this question as follows:
Which minimal subsets $\tilde{L}\subset \P^1(\bar{k})$ have the property
$$
r\subset \tilde{L} \quad \Rightarrow \pi_r(E_r[\infty])\subseteq \tilde{L}?
$$ 
The sets $\tilde{L}$ carry involutions, obtained from 
the translation action of the 2-torsion points of $E$ on $E$, 
which
descends, via $\pi$, to an action on $\P^1$
and defines an embedding of $\Z/2\oplus \Z/2\hookrightarrow \PGL_2(\bar{k})$.
It is conjugated to the standard embedding
of $\Z/2\oplus \Z/2$, generated by involutions 
$$
z\mapsto -z\quad \text{  and } \quad z\mapsto 1/z,
$$ 
acting on $\tilde{L}$.  
This observation is crucial for the discussion in Section~\ref{sect:cyclo}, where we prove
that, modulo projectivities, $L:=\tilde{L}\setminus \{ \infty\} $ are fields.

\bigskip

\noindent {\bf Acknowledgments:} 
The first author was partially supported by the Russian Academic
Excellence Project `5-100' and by Simons Fellowship  and
by  EPSRC programme grant EP/M024830. 
The second author was supported by the MacCracken Program offered
by New York University.
The third author was partially supported by NSF grant 1601912.

\section{Generalities}
\label{sect:general}

%In this section, we present geometric consideration

Let $j:\mathcal E\ra \bP^1$ be the standard universal elliptic curve, with $j$ the $j$-invariant morphism. 
Consider the diagram

\

\centerline{
\xymatrix{ 
E_{\lambda} \ar^{\iota}[r] 
\ar@{}[d]|-*[@]{\subset}   & P_{\lambda} \ar@{}[d]|-*[@]{\subset}
\\ 
\mathcal E \ar^{\iota}[r] \ar[d]_{j}  &   \mathcal P \ar[d]_j  \\
 \bP^1 \ar@{=}[r]  & \bP^1
}
}

\

\noindent
assigning to each fiber $E_{\lambda}:=j^{-1}(\lambda)$ the quotient $P_{\lambda}= \pi(E_{\lambda})\simeq \bP^1$, 
by the involution $\iota: x\mapsto -x$ on $E_{\lambda}$. (This is well-defined even for singular fibers of $j$.)

Note that $\mathcal P\ra \bP^1$ is a $\PGL_2$-torsor. Taking fiberwise $n$-symmetric product: 
$$
 P_{\lambda} \mapsto  \Sym^n(P_{\lambda}) 
 $$
 we have associated $\PGL_2$-torsors  
 $$
 j_n: \mathcal P_n=\Sym^n(\mathcal P)\ra \bP^1. 
 $$
Taking $\PGL_2$-invariants, we have a canonical projection 
$$
\Sym^n(P_\lambda) \ra \mathcal M_{0,n}(P_{\lambda}) \simeq \mathcal M_{0,n},
$$
to the moduli space of $n$-points on $\bP^1$. 
The associated $\PGL_2$-torsor is trivial; fixing a trivialization we obtain a morphism
$$
\mu_n: \mathcal P_n \ra \overline{\mathcal M}_{0,n}
$$

For every $N\in \N$, we have the modular curve $X(N)\ra \bP^1$, parametrizing pairs of elliptic curves together with $N$-torsion subgroups.
The involution $\iota$ induces an involution on every $X(N)$, we have the induced quotient 
$$
X(N)\ra Y(N):=X(N)/\iota.
$$ 
Since the family $j:\mathcal E\ra \P^1$ has maximal monodromy $\SL_2(\Z)$, the curves $X(N)$ and $Y(N)$ are irreducible. 
We have a natural embedding $Y(N)\hookrightarrow \mathcal P$. Put
$$
Y:=\cup_{N\in \N} Y(N)
$$
and consider 
$$
\Sym^n(Y)\hookrightarrow \mathcal P_n \ra  \overline{\mathcal M}_{0,n}.
$$
Note that $\Sym^n(Y)$ is a union of infinitely many irreducible curves, each corresponding to 
an orbit of the action of the monodromy group $\PGL_2(\Z)$ on the generic fiber of the restriction of $j_n$ to $\Sym^n(Y)$.
Let $Y_{n,\omega}\subset \Sym^n(Y)$ 
be an irreducible component corresponding to a $\PGL_2(\Z)$-orbit $\omega$ (for the monodromy action, as above). 
We now formulate conjectures
about $\mu_n$, for small $n$, which guide our approach to the study of images of torsion points. 

\begin{conj}
\label{conj:1}
The map 
$$
\mu_4 : Y_{4,\omega} \to \overline{\mathcal M}_{0,4} =\P^1
$$ 
is finite surjective, for all but finitely many $\omega$. 
\end{conj}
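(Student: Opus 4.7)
The plan is to split the statement into two parts: first, establish a dichotomy that $\mu_4|_{Y_{4,\omega}}$ is either constant or finite surjective; second, show that the constant case occurs only for finitely many $\omega$.

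For the dichotomy, each $Y_{4,\omega}$ is by construction an irreducible algebraic curve, namely the component of $\Sym^4(Y)$ cut out by a single $\PGL_2(\Z)$-monodromy orbit. Passing to its smooth projective model $\bar Y_{4,\omega}$, the rational map $\mu_4$ extends to a morphism $\bar Y_{4,\omega} \to \overline{\mathcal M}_{0,4} = \P^1$. Any such morphism from a smooth projective irreducible curve to $\P^1$ is either constant or a finite surjection, so it suffices to rule out constancy for all but finitely many $\omega$. Constancy of $\mu_4|_{Y_{4,\omega}}$ is equivalent to the assertion that, as the elliptic curve $E_\lambda$ moves along the modular cover underlying $Y_{4,\omega}$, each 4-tuple in the orbit is projectively equivalent to the same fixed tuple $\{c_1,c_2,c_3,c_4\}\subset\P^1$. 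The introduction already identifies such exceptional orbits: the one whose 4-tuple is $\pi(E_\lambda[3])\sim\{1,\zeta_3,\zeta_3^2,\infty\}$, and those drawn from $\pi(E_\lambda[4])\sim\{0,1,-1,i,-i,\infty\}$. These account for only finitely many orbits $\omega$.

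The remaining key step is to prove that any orbit $\omega$ whose underlying 4-tuple contains a torsion section of order $n\ge 5$ yields a non-constant cross-ratio function on $Y_{4,\omega}$. My approach is by degeneration at a cusp of a suitable modular cover $X\to\P^1$ through which $Y_{4,\omega}$ factors. At a cusp $x_0$ lying over $j=\infty$, the universal elliptic curve degenerates to the Tate curve, and the images $\pi(t_i)$ of the four torsion sections admit explicit $q$-expansions as Laurent series in $q^{1/N}$, whose leading terms are determined by the position of $t_i$ in the character lattice of the degenerate torus. One then checks that for $n\ge 5$ these leading terms cannot conspire to produce a constant cross-ratio, contradicting the hypothesis.

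The main obstacle is carrying out this verification uniformly across all orbits: different $\omega$ correspond to different combinatorial types of 4-tuples and different stabilizers in $\PGL_2(\Z)$, and one must exclude accidental cancellation in the leading $q$-terms for infinitely many configurations. The cleanest resolution I can foresee is to enumerate $\PGL_2(\Z)$-orbits of 4-tuples of torsion by combinatorial type on the Tate curve and verify the non-constancy of the cross-ratio for each; alternatively, one may invoke a Manin--Mumford / Zilber--Pink style rigidity statement, according to which the Zariski closure of a monodromy orbit of torsion 4-tuples can lie inside a fiber of $\mu_4$ only if the underlying torsion lies in a proper torsion subscheme, forcing all four orders to be at most $4$ and leaving only the finite list already produced.
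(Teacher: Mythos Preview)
The statement you are trying to prove is labeled a \emph{Conjecture} in the paper, and the paper does not claim to prove it. What the paper offers in Section~\ref{sect:ex} is partial evidence: Lemma~\ref{lemm:mu-cn} (constancy of $\mu_4$ forces constancy of the cross-ratio) and Proposition~\ref{prop:sur} (existence of \emph{some} orbits $\omega$ for which $\mu_4$ is surjective). The latter is proved by exactly the degeneration idea you propose: specialize to the nodal fiber over $j=\infty$, where the $n$-torsion scheme splits into a cyclic part $\mathcal W_n$ landing on roots of unity and a complementary part collapsing to the node, and then check that for orbits containing two points from $\mathcal W_n/\iota$ and two from its complement the cross-ratio cannot be constant. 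So your strategy is the paper's strategy, but the paper applies it only to a specific combinatorial type of orbit and draws only an existence conclusion.

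Your proposal, by contrast, asserts a full proof. The gap you yourself flag is real and is precisely why the statement remains a conjecture: the degeneration argument does not obviously handle all combinatorial types of $4$-tuples (for instance, four points all lying in $\mathcal W_n/\iota$ specialize to four distinct roots of unity, and nothing in the limit forces the cross-ratio to vary), and ``enumerate all types and check each'' is not a proof until carried out, while there are infinitely many monodromy orbits. Your reduction ``orders $\le 4$ give the only constant orbits'' is also imprecise: the paper's Example shows that $\Sym^4(Y(2))$ already surjects onto $\overline{\mathcal M}_{0,4}$, so low torsion does not automatically mean constant $\mu_4$, and conversely you have not shown that a single point of order $\ge 5$ in the tuple forces non-constancy. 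Finally, the appeal to Manin--Mumford or Zilber--Pink is not a proof step: those results concern torsion on a \emph{fixed} abelian or Shimura variety, whereas here one is varying the elliptic curve over a base and asking about constancy of a moduli invariant, which is a different (and open) kind of unlikely-intersection statement. In short, your outline matches the paper's heuristic evidence but does not close the gap that keeps Conjecture~\ref{conj:1} a conjecture.
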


\begin{conj}
\label{conj:2}
The map 
$$
(\mu_4,j):  Y_{4,\omega}\to \overline{\mathcal M}_{0,4}\times \P^1
$$ 
is
a rational embedding, for all but finitely many $\omega$. 
\end{conj}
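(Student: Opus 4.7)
The plan is to derive Conjecture~\ref{conj:2} from Conjecture~\ref{conj:1} by ruling out unexpected projective coincidences between distinct 4-tuples of torsion points on the same elliptic curve.

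First, I would reformulate. The map $j:Y_{4,\omega}\to\P^1$ is generically finite of some degree $m_\omega$ equal to the size of the $\omega$-orbit, and by Conjecture~\ref{conj:1} the map $\mu_4$ is also generically finite. The combined morphism
$$
(\mu_4,j): Y_{4,\omega}\longrightarrow \overline{\mathcal M}_{0,4}\times\P^1
$$
is a rational embedding exactly when, for very general $\lambda\in\P^1$, the $m_\omega$ unordered 4-tuples in the fiber $j^{-1}(\lambda)\subset Y_{4,\omega}$ are pairwise non-equivalent under $\PGL_2(\bar k)$; equivalently, no two distinct 4-tuples of torsion points on $E_\lambda$ lying in the same $\omega$-orbit are projectively equivalent in $\P^1$.

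The key step is to rule out such coincidences. Suppose $S_1\neq S_2$ are two 4-tuples of torsion points on a generic $E_\lambda$, both in the $\omega$-orbit, with $\gamma(S_1)=S_2$ for some $\gamma\in\PGL_2(\bar k)$. Since four generic points determine a unique projective transformation, $\gamma=\gamma(\lambda)$ depends algebraically on $\lambda\in Y_{4,\omega}$. If $\gamma(\lambda)\equiv\gamma_0$ is constant, then $\gamma_0$ preserves the torsion of $E_\lambda$ for all $\lambda$ on a curve; by rigidity and the fact that the only projective automorphisms preserving the image of $E_\lambda[2]$ for generic $\lambda$ are the Klein four-group $z\mapsto\pm z^{\pm 1}$ discussed in the introduction, $\gamma_0$ must lie in this four-group. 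This four-group is already absorbed into the $\PGL_2(\Z)$-monodromy and produces no new $\omega$-coincidences. If instead $\gamma(\lambda)$ varies, one obtains a non-constant morphism $Y_{4,\omega}\to\PGL_2$ whose image must be compatible with the $\PGL_2(\Z)$-monodromy defining $\omega$; a careful analysis of this compatibility should force $\omega$ into a finite exceptional set.

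The main obstacle is uniform control of the varying-$\gamma$ case across all orbits $\omega$. To address it, I would combine Conjecture~\ref{conj:1} (finiteness of $\mu_4$) with explicit calculation for low-$N$ components---directly verifying separation of cross-ratios on $Y(N)$-orbits for small $N$---followed by a specialization argument at a CM fiber $E_\lambda$, where torsion points and their cross-ratios can be enumerated and compared. Upper-semicontinuity of fiber dimension then spreads the rational embedding property to the generic point of $Y_{4,\omega}$. The genuine difficulty, which I expect to be the crux of the problem, is proving that moving projective coincidences cannot occur on positive-dimensional loci in $\mathcal P_4$; in spirit this is a Manin--Mumford-type statement inside the universal family of 4-tuples of torsion points, and I expect it to require finer arithmetic input beyond what Conjecture~\ref{conj:1} alone supplies.
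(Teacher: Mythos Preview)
The statement you are attempting to prove is labeled \emph{Conjecture} in the paper, and the paper gives no proof of it. It is presented as one of several guiding conjectures (Conjectures~\ref{conj:1}--\ref{conj:4}) about the maps $\mu_n$, followed in Section~\ref{sect:ex} only by examples and partial evidence (Lemma~\ref{lemm:mu-cn} and Proposition~\ref{prop:sur} concern Conjecture~\ref{conj:1}, not Conjecture~\ref{conj:2}). So there is nothing in the paper to compare your argument against: you are trying to settle an open problem.

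Your proposal is candid about this. You reduce the rational-embedding claim to separating projective equivalence classes of 4-tuples in a single fiber, and you correctly identify the heart of the matter as controlling a moving $\gamma(\lambda)\in\PGL_2$ that carries one torsion 4-tuple to another along the base. But the proposal is a sketch of a strategy, not a proof: the constant-$\gamma$ case is not fully handled (the Klein four-group acting on $\P^1$ does preserve all of $\pi(E[\infty])$, so such $\gamma_0$ genuinely can identify distinct 4-tuples in the same $\omega$-orbit, and you have not argued why this happens for only finitely many $\omega$), and for the varying-$\gamma$ case you yourself say the required statement ``is a Manin--Mumford-type statement'' needing ``finer arithmetic input beyond what Conjecture~\ref{conj:1} alone supplies.'' That is an honest assessment, but it means no proof has been given.

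In short: the paper does not prove this statement, and neither does your proposal; what you have written is a reasonable outline of where the difficulties lie.
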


\begin{conj}
\label{conj:3}
The map 
$$
\mu_5 : Y_{5,\omega }\to \overline{\mathcal M}_{0,5}
$$ 
is a rational
embedding, for all but finitely many $\omega$. Moreover, if for some distinct orbits $\omega$ and $\omega'$ 
the corresponding images $\mu_5(Y_{5,\omega })$ and  $\mu_5(Y_{5,\omega'})$ are curves, then they are different. 
\end{conj}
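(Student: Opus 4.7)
\textbf{Proof proposal for Conjecture~\ref{conj:3}.}
The plan is to parametrize each $Y_{5,\omega}$ via the Legendre family $E_\lambda\colon y^2=x(x-1)(x-\lambda)$, so that the five $\pi$-images $x_1(\lambda),\ldots,x_5(\lambda)$ of the torsion points indexed by $\omega$ are explicit algebraic functions of $\lambda$ (obtained from division polynomials and isogeny formulas). In these coordinates, $\mu_5$ sends $\lambda$ to the class $[x_1(\lambda),\ldots,x_5(\lambda)]\in\overline{\mathcal M}_{0,5}$. Both parts of the conjecture then reduce to a single rigidity assertion: a generic $\PGL_2$-equivalence between two torsion $5$-tuples on two fibres of the Legendre family must come, outside a finite list of exceptional $\omega$, from an isogeny of the fibres respecting the orbit data.

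For the rational embedding statement, I would assume $\mu_5$ is generically $d$-to-$1$ onto its image on $Y_{5,\omega}$ with $d\ge 2$, so that for generic $\lambda$ there is some $\lambda'\ne\lambda$, a permutation $\sigma$, and $\gamma(\lambda)\in\PGL_2(\bar k)$ with $\gamma(x_i(\lambda))=x_{\sigma(i)}(\lambda')$ for $i=1,\ldots,5$. For orbits $\omega$ in which the chosen torsion points generate a subgroup of sufficiently large exponent inside $E_\lambda[\infty]$, the element $\gamma$ is forced to conjugate the image of 2-torsion on $P_\lambda$ to that on $P_{\lambda'}$ (the ``standard embedding'' $\Z/2\oplus\Z/2\hookrightarrow\PGL_2$ discussed in the introduction), and hence to lift to an isogeny $E_\lambda\to E_{\lambda'}$ of degree bounded in terms of $\omega$ alone. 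A 1-parameter family of such isogenies would yield a component of a Hecke correspondence $X_0(m)\to\P^1\times\P^1$ projecting dominantly to both factors; rigidity of these correspondences forces $\lambda=\lambda'$ for all but finitely many $\omega$. The second assertion is handled analogously: an equality $\mu_5(Y_{5,\omega})=\mu_5(Y_{5,\omega'})$ would produce, for generic $\lambda$, an isogeny identifying the torsion configurations attached to $\omega$ and $\omega'$, and faithfulness of the $\PGL_2(\Z)$-monodromy action on configurations then forces $\omega=\omega'$.

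The main obstacle is the passage from a raw $\PGL_2$-coincidence of five labelled points to an honest isogeny of elliptic curves. A priori, a generic $\gamma\in\PGL_2$ matching five specified points on $\P^1$ has no reason to respect the group-theoretic structure those points carry as torsion images; the rigidity must be extracted from the interaction of the 2-torsion involution embedding with the three additional specified points. Quantifying this, and thereby isolating the finite exceptional set of $\omega$ --- in particular, ruling out coincidences arising from automorphisms of CM fibres and from anomalously small levels --- is the technical heart of the problem, and likely requires a careful analysis of the stabilizer in $\PGL_2$ of a generic $\omega$-configuration combined with the modular interpretation of sections of $\mu_5$.
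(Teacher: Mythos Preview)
The statement you are attempting to prove is \emph{Conjecture}~\ref{conj:3} in the paper; the authors state it as an open problem and provide no proof. There is therefore nothing in the paper to compare your argument against --- the paper offers only indirect evidence (Section~\ref{sect:ex}) for the companion Conjectures~\ref{conj:1}--\ref{conj:4}, and that evidence concerns $\mu_4$, not $\mu_5$.

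As for the content of your proposal: you have correctly located the essential difficulty, and in fact you name it yourself in your final paragraph. The step where you assert that, ``for orbits $\omega$ in which the chosen torsion points generate a subgroup of sufficiently large exponent, the element $\gamma$ is forced to conjugate the image of 2-torsion on $P_\lambda$ to that on $P_{\lambda'}$'' is not justified and is, in general, false as stated. A $\PGL_2$-element matching five unordered points on $\P^1$ carries no a priori obligation to respect the $\Z/2\oplus\Z/2$-structure coming from the 2-torsion; the five points $x_i(\lambda)$ need not contain, nor determine, the branch locus of $\pi_\lambda$. Without this, there is no mechanism producing an isogeny $E_\lambda\to E_{\lambda'}$, and the appeal to rigidity of Hecke correspondences never gets off the ground. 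The same gap recurs in your treatment of the second assertion: equality of images $\mu_5(Y_{5,\omega})=\mu_5(Y_{5,\omega'})$ gives only a pointwise $\PGL_2$-coincidence of configurations, not an isogeny.

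In short, what you have written is an outline of a strategy together with an honest statement of why it does not yet close; it is not a proof, and the paper does not claim one either.
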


\begin{conj}
\label{conj:4}
The map 
 $$
 \mu_6 : Y_{6,\omega}\to \overline{\mathcal M}_{0,6}
 $$
is a rational embedding, for all but finitely many $\omega$. 
Moreover, if $\mu_6 (Y_{6,\omega})$ is a curve then there exist 
at most finitely many $\omega'$ such that 
\begin{itemize}
\item $\mu_6 (Y_{6,\omega'})$ is a curve and 
\item $\mu_6(Y_{6,\omega})\cap \mu_6 (Y_{6,\omega'})\neq \emptyset$.
\end{itemize}
\end{conj}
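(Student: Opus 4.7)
The plan is to bootstrap from Conjecture~\ref{conj:3}: first use the forgetful map $\overline{\mathcal M}_{0,6}\to\overline{\mathcal M}_{0,5}$ to deduce the rational-embedding part, and then use the theorem recalled at the start of the paper together with a degree estimate to bound the orbits $\omega'$ whose image curves can meet a fixed $\mu_6(Y_{6,\omega})$.

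For the rational-embedding statement, fix $\omega$ with $\mu_6(Y_{6,\omega})$ a curve and forget any one of the six marked points to obtain a morphism $Y_{6,\omega}\to Y_{5,\omega_0}$ covering $\mu_5$, for a residual orbit $\omega_0$. If $\mu_6|_{Y_{6,\omega}}$ failed to be birational onto its image, two generic points of $Y_{6,\omega}$ would produce the same $\PGL_2(\bar{k})$-class of $6$-tuple in $\bP^1$. Projecting to $\overline{\mathcal M}_{0,5}$ and applying Conjecture~\ref{conj:3} to the residual $5$-subconfigurations, the underlying elliptic curves would already be forced to coincide, and the residual ambiguity would be an element of $\PGL_2(\bar{k})$ fixing five generically placed points, which must be the identity. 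Removing the (finitely many) orbits whose induced $5$-suborbits land in the exceptional locus of Conjecture~\ref{conj:3} leaves only finitely many $\omega$ to handle by direct inspection.

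For the finiteness of intersecting orbits, an intersection point $p\in\mu_6(Y_{6,\omega})\cap\mu_6(Y_{6,\omega'})$ supplies two elliptic curves $E_\lambda, E_{\lambda'}$ together with an element $\gamma\in\PGL_2(\bar{k})$ identifying the $\omega$-image $6$-tuple from $E_\lambda$ with the $\omega'$-image $6$-tuple from $E_{\lambda'}$. Transporting the double cover $\pi_{\lambda'}$ through $\gamma^{-1}$ produces an elliptic curve isomorphic to $E_{\lambda'}$ whose six chosen torsion images in $\bP^1$ agree with those of $E_\lambda$. An infinite sequence of orbits $\omega'$ each contributing an intersection would thus yield infinitely many pairs of elliptic curves sharing at least six common torsion images; combined with a growth estimate for $\deg\mu_6(Y_{6,\omega'})$ in the level of $\omega'$, this would collide with both the theorem of \cite{BT-small} recalled at the start and the quantitative form Conjecture~\ref{conj:main}.

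The main obstacle is this last step: quantifying the growth of $\deg\mu_6(Y_{6,\omega'})$ and ruling out accumulations of intersection points along $\mu_6(Y_{6,\omega})$ coming from unboundedly deep orbits $\omega'$. This is an effective unlikely-intersection assertion in the spirit of the Pila--Zannier paradigm, transferred here from Shimura varieties to modular configurations in $\overline{\mathcal M}_{0,6}$. I expect the field-theoretic description of $\tilde{L}$ from Section~\ref{sect:cyclo} to play a crucial role, both for making the degree growth quantitative and for converting the combinatorial data of an $\omega'$-orbit into explicit algebraic constraints on $\lambda'$.
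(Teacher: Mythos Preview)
The paper does not prove this statement: it is Conjecture~\ref{conj:4}, stated without proof and supported only by the general heuristics and small-case evidence in Sections~\ref{sect:general}--\ref{sect:ex}. There is no argument in the paper against which to compare your proposal.

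That said, your proposal is not a proof either, and it is important to be clear about where it falls short.

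\textbf{First part (rational embedding).} Your argument is conditional on Conjecture~\ref{conj:3}, which is itself open in the paper; so at best you are proposing an implication Conj.~\ref{conj:3} $\Rightarrow$ first clause of Conj.~\ref{conj:4}. Even that implication has a gap: forgetting one marked point sends a $6$-orbit $\omega$ to a \emph{set} of $5$-orbits, and there is no reason the finitely many exceptional $5$-orbits of Conjecture~\ref{conj:3} lift to only finitely many $6$-orbits (one can attach a sixth torsion point of arbitrary order to a fixed exceptional $5$-configuration). You would need to argue that for any $\omega$ at least one of its six $5$-suborbits is non-exceptional, which does not follow from Conjecture~\ref{conj:3} as stated.

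\textbf{Second part (finiteness of intersecting $\omega'$).} Here the logic does not connect to the inputs you cite. An intersection point gives a single pair $(E_\lambda,E_{\lambda'})$ sharing six torsion images; infinitely many $\omega'$ give infinitely many such \emph{pairs}, not a single pair with unbounded common torsion. Neither the finiteness theorem of \cite{BT-small} (which concerns a fixed pair) nor Conjecture~\ref{conj:main} (which gives a uniform bound $c$, with no claim that $c<6$) is contradicted. The ``degree growth plus B\'ezout'' idea does not help: larger intersection numbers with a fixed $\mu_6(Y_{6,\omega})$ only produce more pairs, each still sharing just six points. You correctly identify this step as the obstacle, but the proposed tools do not address it; what is really needed is the unlikely-intersection statement itself, which is precisely the content of the conjecture.
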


\section{Examples and evidence}
\label{sect:ex}

We now discuss examples and evidence for Conjectures in Section~\ref{sect:general}.

 \begin{exam}
We have
\begin{itemize}
\item  $\mu_4(\Sym^4(Y(2))\simeq \overline{\mathcal M}_{0,4}=\bP^1$,
\item $\mu_4(\Sym^4(Y(3))$ is a point in  $\overline{\mathcal M}_{0,4}$.
\end{itemize}
Consider $\Sym^4(Y(4))$. Note that $\pi(E[4])=\{ 0,1,-1, i,-i,\infty\} $
is an orbit of the symmetric group  $\mathfrak S_4$, acting on $\P^1$. 
The pairs 
$$
(0, \infty),(1,-1),(i,-i)
$$
are pairs of stable points for $3$ even involutions in $\mathfrak S_4$,
and the action of $\mathfrak S_4$ is transitive on pairs and inside
each pair. There are two different $\mathfrak S_4$-orbits
of $4$-tuples: either the orbit contains two pairs
of vertices such as $(0, \infty),(1,-1)$, or a pair and two points
from different pairs $(0, \infty),(1, i)$.
Thus $\Sym^4(Y(4))$ has two components which project to different
points modulo $\PGL_2$; therefore, there exist 
exceptional orbits $\omega$ such that  $\mu_4(Y_{4,\omega})$ is a point. 
\end{exam}

 %%%%%

\begin{lemm}
\label{lemm:mu-cn} 
If $\mu_4(Y_{4,\omega})$ is a point
then all cross ratios of 4-tuples of points parametrized by $Y_{4,\omega}$
are constant. 
\end{lemm}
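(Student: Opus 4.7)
The plan is to unpack directly what it means for $\mu_4(Y_{4,\omega})$ to be a single point of $\overline{\mathcal M}_{0,4}$, and translate this into the language of cross-ratios. Recall that $\overline{\mathcal M}_{0,4}$ is the GIT quotient of $\Sym^4(\P^1)$ by $\PGL_2$, and that it is isomorphic to $\P^1$. Under this isomorphism, the coordinate on $\overline{\mathcal M}_{0,4}$ is any $S_4$-symmetric function of the cross-ratio (e.g.\ the $j$-invariant of the double cover ramified in the 4-tuple). Classically, the $\PGL_2$-orbit of an unordered 4-tuple $\{r_1,r_2,r_3,r_4\}\subset\P^1$ is both determined by and determines the multi-set
$$
C(r) := \left\{\lambda,\; 1-\lambda,\; \tfrac{1}{\lambda},\; \tfrac{1}{1-\lambda},\; \tfrac{\lambda}{\lambda-1},\; \tfrac{\lambda-1}{\lambda}\right\} \subset \P^1
$$
of six cross-ratios indexed by the six cosets of the Klein four-group in $S_4$.

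Next, I would apply the hypothesis: since $\mu_4(Y_{4,\omega})$ is a single point, the image 4-tuple $r(y)\subset\P^1$ remains in a fixed $\PGL_2$-orbit as $y$ varies in $Y_{4,\omega}$. By the dictionary above, this forces the multi-set $C(r(y))$ to be the same fixed multi-set $C\subset\P^1$, independent of $y$.

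To promote this to the statement that every individual cross-ratio is constant on $Y_{4,\omega}$ (not merely that their multi-set is constant), I would pass to the finite cover $\widetilde Y_{4,\omega}\to Y_{4,\omega}$ parametrizing orderings of the 4-tuple. Over $\widetilde Y_{4,\omega}$ the cross-ratio $\lambda$ is a well-defined algebraic function, and by the previous step it takes values in the finite set $C$. An algebraic map from a reduced variety with finitely many irreducible components to a finite subset of $\P^1$ is locally constant, hence constant on each component. Pushing back down to $Y_{4,\omega}$, every cross-ratio attached to a 4-tuple parametrized by $Y_{4,\omega}$ is one of the fixed constants in $C$, as claimed.

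The main obstacle is essentially negligible: the lemma is really a direct translation of the moduli-theoretic assertion into the language of cross-ratios, using nothing beyond the identification $\overline{\mathcal M}_{0,4}\simeq\P^1$ and the fact that an algebraic map from an irreducible variety to a finite set is constant. The only mild subtlety is that the cross-ratio is not a function on $Y_{4,\omega}$ itself but only on the $S_4$-cover, and this is dispatched immediately by irreducibility.
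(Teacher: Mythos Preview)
Your proposal is correct and follows essentially the same route as the paper. The paper's proof factors $\mu_4$ through the Klein-four quotient $(\Z/2\oplus\Z/2)\backslash(\P^1)^4/\PGL_2$, where the cross-ratio is already well defined, and then through the residual $\mathfrak S_3$-quotient; you instead pass to the full $\mathfrak S_4$-cover by orderings. In both cases the punchline is the same: on each irreducible component of the lift the cross-ratio is a regular function landing in the finite $\mathfrak S_3$-orbit determined by the single point $\mu_4(Y_{4,\omega})$, hence constant.
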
 
 
\begin{proof}
The map $\mu_4 $ can be viewed
as a composition
$$
 (\P^1)^4\stackrel{\mathit{cr}}{\longrightarrow} (\Z/2\oplus  \Z/2)   \backslash  (\P^1)^4/\PGL_2 = \P^1_1\to  \mathfrak S_3 \backslash \P^1_1.
$$
Thus we have a diagram

\centerline{
\xymatrix{
(\P^1)^4 \ar[r]^{cr\quad\quad\quad }\ar[d]           &   (\Z/2\oplus  \Z/2) \backslash (\P^1)^4 /\PGL_2 \ar[d]^{\mathfrak S_3} \\
 \mathfrak S_4\backslash (\P^1)^4 \ar[r] &  \mathfrak S_4\backslash (\P^1)^4/\PGL_2
 }
 }

\

\no
Note that any irreducible $Y_{4,\omega}$ lifts
to a union of connected components $Y_{4,\omega,i}\subset (\Z/2\oplus  \Z/2)\backslash Y^4$, 
where cross-ratio is well defined.
Thus if $\mu_4$ is a rational function of cross-ratio
on any four-tuple of points and if $\mu_4$ is constant
then the cross-ratio is also constant.
\end{proof}

\begin{prop}
\label{prop:sur}
There exist orbits $\omega$ such that  
$$
\mu_4: Y_{4,\omega}\ra \P^1
$$
is surjective.   
\end{prop}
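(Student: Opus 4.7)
My plan is to take $\omega$ to be the orbit whose underlying 4-tuple is $\pi(E[2])$, the complete image of the 2-torsion subgroup of a generic fiber, and to verify that $\mu_4$ is surjective on the corresponding component by an explicit computation in the Legendre family.

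First I would check that this gives a single $\PGL_2(\Z)$-orbit on the generic fiber of $j_4|_{\Sym^4(Y)}$. The monodromy acts on $E[2]$ through its quotient $\PGL_2(\Z/2)=\mathfrak S_3$, which permutes the three nontrivial 2-torsion points transitively and fixes the identity; in particular, the unordered quadruple $\pi(E[2])$ is setwise preserved. Hence the corresponding component $Y_{4,\omega}\subset \Sym^4(Y(2))\subset \Sym^4(Y)$ has constant generic fiber and is birational to the $j$-line $\bP^1_j$ via the structure map $j_4$.

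Next I would compute $\mu_4|_{Y_{4,\omega}}$ using the Legendre model $E_\lambda:y^2=x(x-1)(x-\lambda)$, in which the branch set of $\pi_\lambda$ is visibly $\{0,1,\lambda,\infty\}$. Thus the restriction of $\mu_4$ sends $\lambda$ to the class of this quadruple in $\overline{\mathcal M}_{0,4}$. Under the classical identification $\overline{\mathcal M}_{0,4}\simeq \bP^1_\lambda/\mathfrak S_3$ (cross-ratio modulo its $\mathfrak S_3$-symmetries), this is simply the quotient map by the cross-ratio symmetries. Descending to the base $\bP^1_j$, it becomes the $j$-invariant identification $\bP^1_j\simeq \overline{\mathcal M}_{0,4}$, which is surjective.

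The only subtle point is verifying the compatibility of the two identifications $\bP^1_j\simeq \bP^1_\lambda/\mathfrak S_3\simeq \overline{\mathcal M}_{0,4}$, which is classical and amounts to the fact that the $j$-invariant of an elliptic curve is an $\mathfrak S_3$-invariant rational function of its Legendre parameter. Once this is in hand, $\mu_4|_{Y_{4,\omega}}$ is in fact an isomorphism, which is stronger than the required surjectivity.
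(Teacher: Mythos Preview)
Your argument is correct and proves the proposition as stated, but it takes a different route from the paper's proof.

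You pick the tautological orbit: the unordered $4$-tuple given by the full branch locus of $\pi$, i.e., the image of the $2$-torsion subgroup (including the identity). As you note, this $4$-tuple is monodromy-invariant, so the corresponding component $Y_{4,\omega}$ is a section over the $j$-line, and $\mu_4$ becomes the classical identification $\bP^1_j\simeq\overline{\mathcal M}_{0,4}$ coming from the Legendre family. This is exactly the content of the first bullet of the Example preceding Lemma~\ref{lemm:mu-cn}, which the paper states without proof; your argument supplies that proof. One small notational point: in this paper $E[n]$ denotes points of order \emph{exactly} $n$, so $\pi(E[2])$ has only three points; your $4$-tuple is really $\pi(E[1]\cup E[2])$, the full branch set.

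The paper's proof does something different and, in view of Conjecture~\ref{conj:1}, more informative. It works with orbits arising from $n$-torsion for arbitrary $n$: choosing $z_1,z_2\in W_n/\iota$ and $z_3,z_4\notin W_n/\iota$ and degenerating to the nodal fiber $\mathcal E_\infty$, one sees that $z_3,z_4$ both specialize to $0$ while $z_1,z_2$ stay distinct, forcing the cross-ratio (hence $\mu_4$, via Lemma~\ref{lemm:mu-cn}) to be nonconstant. This produces infinitely many surjective orbits, one for every such configuration, which is genuine evidence toward Conjecture~\ref{conj:1}. Your approach yields only a single orbit, but that is all the proposition literally asks for.
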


\begin{proof}
The singular fiber $\mathcal E_{\infty}:=j^{-1}(\infty)$ 
is an irreducible rational curve with one node $p_{\infty}$. The group scheme $\cup _{d\mid n} \mathcal E[d]$, whose generic fiber is isomorphic to 
$\Z/n\oplus\Z/n$, specializes to $\{\zeta_n^i\}\subset \mathbb G_m=\mathcal E_{\infty}\setminus p_{\infty}$. 
Let $\mathcal E_{\infty}[n]$ be the specialization of $\mathcal E[n]$; then 
\begin{itemize}
\item $\mathcal E_{\infty}[n]\subset \{ \zeta_n^i\}$, 
\item there exists a subgroup scheme $\mathcal W_n\simeq \Z/n\subset\Z/n\oplus \Z/n$ in the group scheme 
of points killed by $n$, specializing to  $\mathcal E_{\infty}$, while the complemenary branches specialize to 
 $p_{\infty}$. 
\end{itemize} 
Taking the quotient by $\iota$, we find that
 $((\Z/n\oplus \Z/n)\setminus \Z/n)/\iota $ specializes to $0$ in the fiber $\P^1_{\infty}$ and
 all other points specialize to subset in $(\Z/n)/\iota$; the limit depends
on the selected direction of specialization.

Assume that we have distinct points $\{ z_1,z_2,z_3,z_4\} \subset \pi(E[n])$, for a smooth fiber $E$ of $\mathcal E$, such that 
$$
z_1,z_2\in  W_n/\iota \quad \text{ and } \quad z_3,z_4\notin W_n/\iota.
$$ 
The $z_1, z_2$ can be specialized to different nonzero points in $\mathcal E_{\infty}/\iota$, and 
$z_3,z_4$ will specialize to $0$. 

Assume that $\mu_4$ is constant, i.e., the cross-ratio is constant. 
Since  $z_3,z_4$ will specialize to $0$, the cross-ratio equals 1.  
Then
$$
(z_1- z_3)(z_2-z_4)= (z_2- z_3)(z_1-z_4), 
$$
and 
$$
z_1(z_3-z_4)=z_2(z_3-z_4). 
$$
Near the special fiber, $z_3\neq z_4$, thus $z_1=z_2$, contradiction. 
Thus on orbits of this type, $\mu_4$ is not constant, hence surjective.
 \end{proof}
 
 %%%%%%%

%%%%%%%

\section{Geometric approach to effective finiteness} 
\label{sect:geom}

Let $E:=E_r,E':=E_{r'}$ be elliptic curves.
%such that 
%$$
%\pi(E[2])\cap \pi(E'[2])=\{ 0,1,\infty \}\subset \bP^1.
%$$ 
Consider the diagram

\centerline{
\xymatrix{
C\ar[d] \ar[r]&  E\times E' \ar[d] \\
\Delta \ar[r] &    \P^1\times \P^1
}
}

\no
where 
$C\subset E\times E'$ be the fiberwise product over the diagonal  
$\Delta\subset \P^1\times \P^1$. If $r\neq r'$ then $C$ has genus $\ge 2$. 
By Raynaud's theorem \cite{raynaud}, 
$$
C(\bar{k})\cap E[\infty]\times E'[\infty] 
$$
is finite, since it is 
the preimage of $\pi(E[\infty])\cap\pi(E'[\infty])\subset \Delta$, the latter set is also finite. 
This finiteness argument appeared in \cite{BT-small}. 

Consider the curves $C$ occurring in this construction. We have a diagram

\centerline{
\xymatrix{ 
C \ar[d]_{\sigma'}\ar[r]^{\sigma} & E\\
E' &  
}
}

\

\noindent
where $\sigma, \sigma'$ are involutions with fixed points $c_1,c_2$ and $c_1', c_2'$, respectively.  Assume that 
$$
r\cap r' = \{ 0, 1, \infty\}. 
$$
Then the product involution $\sigma\sigma'$ on $C\subset E\times E'$ has fixed points in the 6 
preimages of the points $\{ 0,1,\infty \}\subset \Delta_{\P^1}\subset \P^1\times \P^1$ (diagonally), i.e., is the hyperelliptic involution.  
Thus there is an action of $ \Z/2\oplus \Z/2$ on $C$, induced by the covering maps $\pi$ and $\pi'$.  
The curve $C\subset A=E\times E'$ has self-intersection $C^2=8$
since it is a double cover of both  $E$ and $E'$ and its class is equal to $2(E+E')$.  

\begin{itemize}
\item
If the genus $\mathsf g(C)=2$ (three such points) then the image of $C$ in its Jacobian $J(C)$ has self-intersection 2. 
Consider the map
$$
\nu: J(C)\to  A= E\times E'.
$$ 
and let $n$ be its degree.  
The preimage $\nu^{-1}(C)\subset J(C)$ has self-intersection $8n$. 
On the other hand, its homology class is equal to $n$ translations of $C$, hence has self-intersection $2n^2$, thus $n=4$. 
Moreover, $\ker(\nu) = \Z/2\oplus \Z/2$, generated by the pairwise differences of preimages of points $\{ 0,1,\infty \}$.  
Thus, $J(C)$ is 4-isogenous to $A:=E\times E'$ and 
$\nu(C)$ is singular, with nodes exactly at the preimages of  $\{ 0,1,\infty \}\subset \Delta_{\P^1}$. 
Consider a point $c\in C\subset J(C)$ and assume that $\nu(c)$ has order $m$ with respect to  $0\in A$. 
Then $c$ has order $m$ or $2m$ in $J(C)$, with respect to $0 \in J(C)$.  
Hence the corresponding curve $Y(m)\subset \P^1\times \P^1$ (viewed as a moduli
space of pairs $E,E'$)
is given as an intersection of genus $2$ curves containing a point of order $m$ or $2m$, respectively. This is a locus in the moduli space
 $\mathcal M_2$ of genus 2 curves. 
\item 
If $\mathsf g(C)=3$ (two such points) then
there are three quotients of $C$ which are
elliptic curves $E_1,E_2,E_3$, with involutions $\sigma_i\in \Z/2\oplus \Z/2$ 
fixing $4$ points on $E_i$ which are invariant under the hyperelliptic
involution given by complement to $\sigma_j$. 
The kernel of
$$
\nu_i: J(C)\to E_j\times E_k
$$ 
contains
$E_i$, for $i,j,k\in \{ 1,2,3\} $. 
\item 
If $\mathsf g(C)= 4$ then $C$ is $C/\sigma_i = E_i, i=1,2$
and $C/\sigma_1\sigma_2 = C'$ where $\mathsf g(C')=2$ and
there are exactly two ramification points on $C'$.
\item 
If $\mathsf g(C)= 5$ then $C/\sigma_1\sigma_2 = C'$ is a hyperelliptic
curve of genus $3$ and the covering is an unramified double cover.
\end{itemize}

\begin{rema}
\label{rema:stand}
Assume that there is $b\in \P^1$ 
and a subset $S\subset C(\bar{k})$ such that $S+b \subset C\subset E\times E'$.
 Then 
 $$
 \# S\le 8 = C^2 = C \cap (C + b);
 $$ 
 hence we have
  at most $8$ points $c_i\in \P^1$ such that  for $x$-coordinates
 $ c_i+_1 b = c_i +_2 b$, where the summation $+_1$ corresponds to the
 summation on the first curve and $+_2$ on the second.
\end{rema}

\begin{rema}
The construction can be extended to products of more than two elliptic curves. 
We may consider
$$
\pi:=\prod_{i=1}^r   \pi_i :  \mathcal A:=\prod E_i\to \mathcal P :=\prod \P^1_i.
$$
The ramification divisor of $\pi :\mathcal A \ra \mathcal P$ 
is a union of products of projective lines. Let $\Delta=\P^1\subset \mathcal P$ be the diagonal,
there exists canonical identifications  $\delta_i: \P_i^1 \simeq \Delta$.  
If $p\in \Delta$  is contained in  $\delta_i(\pi_i(E_i[\infty]))$, for all $i$, 
then the preimage of $p$ in $\mathcal A$ is contained 
in the preimage of the diagonal. This is a curve of genus at least 2, provided 
there exist $E_i, E_j$ with $r_i\neq r_j$.  Then the set of such $p$ is finite. 
In particular, if $E$ is defined over a number field $k$ and $p$ is defined over a proper subfield, then 
$p$ is also in the image torsion points of $\gamma(E)$, where $\gamma$ is a Galois conjugation. Hence, 
the existence of torsion points with $x$-coordinate in a smaller field has a geometric implication. 
\end{rema}

We expect the following version of Conjecture~\ref{conj:main}:

\begin{conj}[Effective Finiteness--EFC-II]
\label{conj:main2}
There exists a constant $c>0$ such that for every elliptic curve $E_r$ over a number field 
and every $\gamma\in  \PGL_2(\bar{\Q})$ with $\gamma(r)\neq r$ we have
 $$
\pi_r(E_r[\infty])\cap \pi_\gamma(E_\gamma[\infty]) < c.
$$
\end{conj}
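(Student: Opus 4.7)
The plan is to reduce EFC-II to a uniform Manin--Mumford bound on curves of bounded genus in abelian surfaces, via the geometric setup of Section~\ref{sect:geom}. Write $S:=\pi_r(E_r[\infty])\subset\P^1$. Because the projectivity $\gamma$ transports one double cover to the other, $\pi_{\gamma(r)}(E_{\gamma(r)}[\infty])=\gamma(S)$, so the inequality to prove becomes $|S\cap\gamma(S)|<c$ for every $\gamma$ with $\gamma(r)\neq r$. Setting $r':=\gamma(r)$, form the fiber product $C:=E_r\times_{\Delta}E_{r'}$ inside the abelian surface $A:=E_r\times E_{r'}$. The set $S\cap\gamma(S)$ is the image under $\pi$ of $C(\bar\Q)\cap A_{\mathrm{tors}}$, up to a bounded factor from the $\Z/2\oplus\Z/2$-action of the two covering involutions. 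Since $r\sim r'$ we have $A\cong E_r^2$, and since $r\neq r'$ the analysis of Section~\ref{sect:geom} gives $g(C)\in\{2,3,4,5\}$ according to $|r\cap r'|$.

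The next step is to invoke a uniform version of Raynaud's theorem for curves of bounded genus inside an abelian surface isogenous to a square of an elliptic curve. Such statements are available through recent work on uniform Manin--Mumford and uniform Mordell--Lang (K\"uhne; DeMarco--Krieger--Ye; Dimitrov--Gao--Habegger): for a non-degenerate curve $C$ in an abelian surface $A$, the number $\#(C\cap A_{\mathrm{tors}})$ is bounded by a constant depending only on $g(C)$. Since $g(C)\le 5$ one extracts an absolute constant $c_0$, and dividing by the covering degree yields EFC-II with $c$ equal to $c_0$ up to a small multiplicative loss.

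The principal obstacle is verifying non-degeneracy of $C$ in $A$ uniformly in $(r,\gamma)$. Because $A\cong E_r^2$, the ``special subvarieties'' that destroy uniform bounds include every graph of an isogeny $\phi\colon E_r\to E_r$; one must check that no translate of such a graph is a component of $C$. The diagonal and anti-diagonal are excluded by $r\neq r'$, but in the CM case $\mathrm{End}(E_r)\otimes\Q$ is a $2$-dimensional algebra and additional graphs need to be ruled out. To handle these degenerate cases, I would exploit the $(\Z/2\oplus\Z/2)$-symmetry on $C$ coming from the covering involutions $\iota_r$ and $\iota_{r'}$, together with the rigidity predicted by Section~\ref{sect:general}: Conjectures~\ref{conj:3} and~\ref{conj:4} assert that the maps $\mu_5$ and $\mu_6$ from the modular curves $Y_{n,\omega}$ into $\overline{\mathcal M}_{0,n}$ are generically rational embeddings with only finitely many pairwise collisions, so any excess in $|S\cap\gamma(S)|$ produces too many $5$- or $6$-tuples of common points with incompatible orbit data, forcing the bound. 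A secondary obstacle is absolute independence of the $j$-invariant of $E_r$, in particular handling the infinitely many CM $j$-invariants simultaneously; this is precisely the regime where the new uniform-height machinery is essential.
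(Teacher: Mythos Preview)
This statement is a \emph{Conjecture} in the paper (EFC-II, Conjecture~\ref{conj:main2}); the authors do not prove it. It is offered as a sharpening of EFC-I (Conjecture~\ref{conj:main}), motivated by the fiber-product construction of Section~\ref{sect:geom}, and then left open. There is therefore no proof in the paper for your proposal to be compared against.

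On the proposal itself: the reduction to bounding torsion on the fiber-product curve $C\subset A=E_r\times E_{\gamma(r)}$ is exactly the mechanism of Section~\ref{sect:geom}, and appealing to a uniform Manin--Mumford theorem is the natural line of attack. Two remarks. First, your ``principal obstacle'' is not one: since $r\neq\gamma(r)$ forces $|\gamma(r)\setminus r|\ge 1$, the double cover $\tilde C\to E_r$ is genuinely ramified, hence $\tilde C$ is irreducible of genus at least $2$ and cannot be, or contain as a component, the graph of an endomorphism of $E_r$ (such graphs have genus $1$), CM or otherwise. Second, and decisively, the argument does not close as written. The uniform Manin--Mumford input you invoke postdates the paper, and even granting it you must match hypotheses: the K\"uhne/DGH statements are formulated for a curve inside its Jacobian, whereas here $g(\tilde C)\in\{3,4,5\}$ may exceed $\dim A=2$, so torsion in $A$ pulls back not to torsion in $J(\tilde C)$ but to a union of torsion cosets of a positive-dimensional abelian subvariety, and an additional step is required. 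Your fallback to Conjectures~\ref{conj:3} and~\ref{conj:4} does not help, since those are likewise left open in the paper. What you have is a coherent strategy aligned with the paper's own heuristics, not a proof of the conjecture.
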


\section{Fields generated by elliptic division}
\label{sect:cyclo}

In this section, we explore properties of subsets of $\P^1(\bar{k})$ generated by images of torsion points, following closely \cite{BT-small}.
For 
$$
r:=\{ r_1, r_2,r_3,r_4\}\subset \P^1(\bar{k}),
$$ 
a set of four distinct points, let $E_r$ be the corresponding elliptic curve defined in the Introduction. Let
$$
\tilde{L}_r\subset \P^1(\bar{k})
$$ 
be the smallest subset such that for every $E_{r'}$ with $r'\subseteq \tilde{L}_r$ we have $\pi_{r'}(E_{r'}[\infty])\subseteq \tilde{L}_r$.

\begin{thm}\cite{BT-small}
\label{thm:field}
Let $k$ be a number field. 
For every $a\in k\setminus \{0,\pm1,\pm i\}$, and 
\begin{equation}
\label{eqn:ra}
r=r_a:=\{ a,-a,a^{-1} , -a^{-1}\} \subset \P^1(k)
\end{equation}
the set
$$
L_{a}:=\tilde{L}_{r_a}\backslash\{\infty\}
$$ 
is a field.
\end{thm}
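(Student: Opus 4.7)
The plan is to establish that $L_a$ contains $0$ and $1$ and is closed under the four field operations, by combining the Klein four symmetry of $r_a$ with the iterated closure in the definition of $\tilde L_a$. \textbf{Step 1 (Klein four symmetry and six canonical points).} The ramification set $r_a = \{\pm a, \pm a^{-1}\}$ is a single orbit under the Klein four group $V = \langle z \mapsto -z,\, z \mapsto 1/z\rangle \subset \PGL_2(\bar k)$. Since $V$ is the image on $\P^1 = E_{r_a}/\iota$ of translations by $E_{r_a}[2]$, the set $\pi_{r_a}(E_{r_a}[\infty])$ is $V$-stable, and iterating the closure renders $\tilde L_a$ itself $V$-stable. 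The three nontrivial elements of $V$ have fixed-point pairs $\{0,\infty\}$, $\{\pm 1\}$, $\{\pm i\}$, and each pair is the image of two 4-torsion points of $E_{r_a}$. Hence $\{0,\pm 1,\pm i,\infty\} \subset \tilde L_a$, so $0, 1 \in L_a$, and $L_a$ is closed under $x \mapsto -x$ and (for $x \neq 0$) under $x \mapsto 1/x$.

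\textbf{Step 2 (Legendre curves and square-root closure).} For $b \in L_a \setminus \{0,1\}$, the four points $\{0,1,b,\infty\}$ lie in $\tilde L_a$ and are distinct, so the Legendre elliptic curve $E_b : y^2 = x(x-1)(x-b)$ is admissible and $\pi(E_b[\infty]) \subset \tilde L_a$. A direct computation shows that the three translations by 2-torsion descend to the involutions $x \mapsto b/x$, $x \mapsto (x-b)/(x-1)$, $x \mapsto b(x-1)/(x-b)$, with respective fixed-point pairs $\pm\sqrt{b}$, $1\pm\sqrt{1-b}$, $b\pm\sqrt{b^2-b}$. Each pair is the image of two 4-torsion points, hence lies in $\tilde L_a$. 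In particular $\sqrt{b} \in L_a$ for every $b \in L_a$. Applying the analogous construction to the curve with ramification $\{b,-b,c,-c\}$ (whenever these four points are distinct) yields the involution $x \mapsto bc/x$, giving $\sqrt{bc} \in L_a$ as well.

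\textbf{Step 3 (Addition and multiplication).} To upgrade to closure under addition and multiplication, note that the pairs produced in Step 2 already satisfy the Vieta-type identities $(b+\sqrt{b^2-b}) + (b-\sqrt{b^2-b}) = 2b$ and $(b+\sqrt{b^2-b})(b-\sqrt{b^2-b}) = b$, and similarly for the other two involutions. Combined with the $V$-closure of Step 1 and the square-root closure of Step 2, these permit a bootstrap: for $b, c \in L_a$, the element $\sqrt{bc}$ is already in $L_a$; feeding it back as the parameter of a new Legendre curve produces $1 \pm \sqrt{1 - \sqrt{bc}}$ and related radicals; iterating with all available elements and all three Klein four reflections, one builds admissible elliptic curves whose 4-torsion images include $b+c$ and $bc$ as $V$-translates. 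The detailed induction is carried out in \cite{BT-small}.

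\textbf{Main obstacle.} The core difficulty is in Step 3: converting the explicit fixed-point identities of Step 2 into unconditional closure under the two binary field operations without begging the question. Every auxiliary parameter used to realize $b+c$ or $bc$ as a torsion image must itself already belong to $L_a$, so the argument is necessarily an induction on the complexity of the rational expressions involved. The strategy of \cite{BT-small} is to leverage the redundancy of the Klein four symmetry together with the richness of the 4-torsion fixed-point formulas to guarantee that each inductive step produces new elements of $L_a$ and that the closure terminates in a genuine subfield of $\bar k$.
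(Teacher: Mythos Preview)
Your Steps 1 and 2 are correct and align with the paper's ingredients (the paper's Step 4 in particular). The gap is Step 3: you do not actually prove closure under addition or multiplication, you defer to \cite{BT-small}. The Vieta observation is circular---knowing that $(b+\sqrt{b^2-b})+(b-\sqrt{b^2-b})=2b$ as an algebraic identity does not place $2b$ in $L_a$ unless you already know $L_a$ is closed under addition. Likewise ``one builds admissible elliptic curves whose 4-torsion images include $b+c$ and $bc$'' is asserted, not shown, and this is precisely the content of the theorem.

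The paper's route is structurally quite different from your direct attack, and it is worth knowing because it explains why the number-field hypothesis is used. First the paper treats the single special set $r_0=\{0,1,-1,\infty\}$ and proves that $L:=\tilde L_{r_0}\setminus\{\infty\}$ is a field. Multiplication: for $b\in L\setminus\{0\}$ one has $b\cdot\{0,1,-1,\infty\}=\{0,b,-b,\infty\}\subset \tilde L$, hence $bL=L$ by minimality applied in both directions. Addition: one shows the group $\Aut_L\subset\PGL_2$ preserving $\tilde L$ contains enough elements (in particular $x\mapsto (x-1)/(x+1)$) to send any pair in $\tilde L$ to $\{0,1\}$; from this one extracts $x\mapsto x+1\in\Aut_L$, whence $a+b=b(a/b+1)\in L$. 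Only after $L$ is known to be a field does one pass to general $L_a$: since $\{0,\pm1,\infty\}\subset\pi(E_{r_a}[4])$ one has $L\subset L_a$, and the remaining Steps 4--7 show, via an inductive square-root/polynomial argument, that $L(b)\subset L_a$ for every $b\in L_a$ algebraic over $L$. Here the hypothesis that $k$ is a number field is invoked, to guarantee algebraicity of $L_a$ over $L$.

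So the missing idea in your proposal is the two-stage reduction: prove the field property first for the distinguished $r_0$ (where the scaling and translation symmetries of $\P^1$ are directly visible), and only then bootstrap to $L_a$ using that $L$ is already a field together with algebraicity. Your sketch tries to get $b+c$ and $bc$ in $L_a$ in one shot from 4-torsion data, and that is exactly the step neither you nor the paper can do without the intermediate field $L$.
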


At first glance, it is rather surprising that such a simple and natural construction, inspired by comparisons of $x$-coordinates of torsion points of
elliptic curves, produces a field. The conceptual reason for this is the rather peculiar structure of 4-torsion points of elliptic curves:
translations by 2-torsion points yields, upon projection to $\P^1$, {\em two} standard commuting involutions on $\P^1(\bar{k})$, which allow to define
addition and multiplication on $L_a$.  

We may inquire about arithmetic and geometric properties of the fields $L_a$. For $a\in \bar{k}$ we let $k(a)\subseteq \bar{k}$
denote the smallest subfield containing $a$. We have:
\begin{itemize}
\item 
For every $a\in \bar{k}$, the field $L_a$ is a Galois extension of $\Q(a)$. 
\item 
For every $k$ of characteristic zero, $L_a$ contains $\Q^{ab}$, the maximal abelian extension of $\Q$. 
\item
The field $L_\zeta$, where $\zeta$ is a primitive
root of order $8$, is contained in any field $L_a$.
Indeed, the corresponding elliptic curve $E$ has ramification subset 
$$
\{ \zeta,\zeta^3,\zeta^5,\zeta^7\}, 
$$ 
which is projectively equivalent to $\{ 1,-1, i,-i\} \subset \pi(E[4])$.
Since $\pi(E[4])$ projectively does not depend on the curve $E$, we obtain
that $L_\zeta\subset L_a$, for all $a$.
The same holds for $L_a$ where $E_a$ is isomorphic
to $E_3$ (elliptic curve with an automorphism of order $3$).
\item 
The field $L_a$ is {\em contained} in a field obtained as an iteration of Galois extensions with Galois groups either abelian or 
$\PGL_2(\mathbb F_q)$, for various prime powers $q$. Is $L_a$ equal to such an extension? 
As soon as the absolute Galois group is not equal to a group of this type, e.g., for
a number field $k$, we have 
$$
L_a\subsetneq \bar{k}.
$$ 
\item 
Let $a,a'\in \bar{\Q}$ be algebraic numbers such that $\Q(a)=\Q(a')$. Then $L_a=L_{a'}$. 
Varying $a\in \bar{\Q}$, we obtain a supply of interesting infinite extensions $L_a/\Q$.
\end{itemize}

The rest of this section is devoted to the proof of Theorem~\ref{thm:field}. 

\begin{proof}
Let $r_0:=\{ 0, \infty, 1, -1\}$ and put $L:=\tilde{L}_{r_0}\setminus \{ \infty\}$. Let 
$$
\pi=\pi_{r_a}:E_{r_a}\ra \P^1
$$ 
be the elliptic curve
with ramification in $r_a$. Since 
$$
\{0,\infty,\pm1\}\subseteq\pi(E_{r_a}[4]),
$$
we have 
$L\subseteq L_{a}$, for all $a$. 
We first show that $L$ is a field.

\

{\em Step 1.} 
$L\backslash\{0\}$ is a multiplicative group.
Indeed, for any $b\in L\backslash\{0\}$, we have
$$
r_0:=\{ 0,1,-1,\infty\}  = b^{-1} \cdot \{0,b,-b,\infty\} =:r_b
$$
and hence 
$$
L_{r_b}= b\cdot L_{r_0}= b\cdot L.
$$
Since $b^{-1},-b^{-1}\in L$ we also
have $\{ 0,1,-1,\infty\} \subset b\cdot L$.
Thus $L\subseteq b L$. 
Similarly, $ L\subseteq b^{-1}\cdot L$ or $b \cdot L\subseteq L$,  
which implies $L=bL$.
Thus for any $a,b\in L$ we have $ ab \in L$,
and since the same holds for $ ab^{-1}$, $b\neq 0$, we obtain
$L\setminus \{ 0\} \subseteq \bar{k}^\times$.

\

{\it Step 2.} 
Let 
$$
\Aut_{L}:=\{ \gamma \in \PGL_2(\bar{k}) \,  | \,  \gamma(\tilde{L})\subseteq \tilde{L} \}
$$
be the subgroup preserving $\tilde{L}$. 
It is nontrivial, since it contains $L\setminus \{ 0\} $ as a multiplicative subgroup, together 
with the involution $x\mapsto x^{-1}$. 
Consider 
$$
\gamma_1: x\mapsto (x-1)/(x+1).
$$
It is an involution with
$\gamma_1(\infty)= 1, \gamma_1(0)= -1$
and hence $\gamma_1$ is coming from
$r:=\{0,1,-1,\infty\}$.
Thus it maps $L$ into $L$ and
$\gamma_1\in \Aut_L$

Consider any pair of distinct elements
$\{ b,c\} \subset L$: it  can be transformed into $\{ 0,1\}$
by an element from $\Aut_L$.
If $b\neq 0,\infty$ then,
dividing on $b$, we obtain
$\{ 1,c/b\}$ and $\gamma_1(\{ 1,c/b\})=\{0,1\}$.
If $b=0$ and $c\neq \infty$ then, dividing
on $c$, we obtain $\{0,1\}$.
If $b=0,c= \infty$ then $\gamma_1(\{0,\infty\})= \{-1,1\}$
and we reduce to the first case.

\

{\it Step 3.}
$L$ is closed under addition.
We show that $\gamma: x\mapsto x+1$ is contained in
$\Aut_L$: by Step 2, there exists a  $g\in \Aut_L$ 
which maps $\{-1,\infty\}$ to $\{0,\infty\}$ and hence
$\{-1,0,\infty\}$ to $\{0,b,\infty\}$, 
for some $b\in L\setminus \{0\}$.
Then $b^{-1}g\in \Aut_L$ maps  $\{ -1,0,\infty\}$ to
$\{0,1,\infty\}$ and hence $b^{-1}g(x)= \gamma(x)=x+1$.
Thus for any $a\in L$ we have $a+b = b(a/b + 1)\in L$, which shows that 
$L$ is an abelian group.

\

Now let us turn to the general $L_{a}$.

\

{\it Step 4.} 
Note that $L\subset L_a$ and that 
$L_a$ is closed under taking square roots.
Indeed for any $a\in L$ and $E_r$ with $r:=\{ 0,1,a,\infty\}$, we have 
$\sqrt{a}\in\pi_r(E_r[4])$ and hence $\sqrt{a}\in L_a$.
Furthermore, for any $a,b\in L_a$ we have $\sqrt{ab}\in L_a$.
Indeed, consider the curve $E_r$ with $r= \{0,a,b,\infty\}$. 
Then  $\sqrt{ab}\in \pi(E_r[4])$, since 
the involution $z\to ab/z$ is contained in the subgroup $\Z/2\oplus \Z/2$ corresponding to the two-torsion on $E_r$, 
its invariant points are in $\pi_r(E_r[4])$.
Iterating, we obtain that 
$$
\sqrt[2^{m-1}]{b_1\cdots b_m}\in \tilde{L}_a\setminus \{ \infty\} \text{  for all } \, b_i\in 
\tilde{L}_a\setminus \{ \infty\}
$$

\

{\em Step 5.}
For all $b\in L_a, c\in L$ we have $\sqrt{b+ c}\in L_a$.
Indeed, for $c\in L$ we know that there is a solution $d\in L$
of the quadratic equation $d^2+d + c =0$.
Consider the curve $E_r$ for $r:=\{\infty,b, d , d+1\}$.
Then 
$$
d\pm \sqrt{b-d}\in \pi(E_r[4])
$$
and hence $d\pm\sqrt{b-d}\in L_a$.
Thus
$$
\sqrt{(\sqrt{b-d} + d)(\sqrt{b-d}- d)}= \sqrt{b -d^2 -d}= \sqrt{b+c} \in L_a.
$$

\

{\em Step. 6.}
Let $P_m\in L[x]$ be a monic
polynomial of degree $m$ and let $b\in L_a$.
Then there is an $N(m)\in \N$ such that 
$$
\sqrt[4^{N(m)}]{P_m(b)}\in L_a.
$$
Indeed, we have
$$
P_m(b)= c_m + b(c_{m-1} + b(c_{m-2}+\cdots )\cdots ). 
$$
The statement holds for $m=1$ by Step 4.
Assume that it holds for $m-1$.
Then $c_{m-1} + b(c_{m-2}+ \cdots )= d^{4^{N(m-1)}}$ for
some $d\in L_a$.
We can then write 
$$
P^m(b)=c_m + bd^{4^{N(m-1)}},
$$
by taking $t= \sqrt[4^{N(m-1)}]{b}$ and $u_m= \sqrt[4^{N(m-1)}]{c_m}$
we obtain 
$$
P^m(b)= \prod (t+ \zeta^i u_m),
$$ 
where
$t\in L_a, u_m \in L$ and $\zeta^i$ runs through the roots of unity
of order $4^{N(m-1)}$.

By Steps 4 and 5, we obtain that $4^m 4^{N(m-1)}$-th
root of  $P_m(b)$
is contained in $L_a$, thus the result holds for
$N(m)= 4^{N(m-1)}$

\

{\em Step 7.}
Let $b\in L_a$ be any algebraic element over $L$.
Then the field $L(b)$ is a finite extension of $L$ and
there is an $n\in \N$ such that any $x\in L(b)$
can be represented as a monic polynomial of $b$ with coefficients in $L$
of degree $ \leq n$. For such $n$ we define
a power $4^N$ such $\sqrt[4N]{x}\in L_a$, but then
any element in $L(b)$  is in $L_a$.

\end{proof}

\

\begin{rema}
In the proof we have only used
points in $\pi(E[4])$. Therefore, for any subset $D\subset \N$ 
containing $4$ we can define $L_{a,D}$, as the smallest subset containing all $\pi(E[n])$ for all $n\in D$ and
all elliptic curves obtained as double covers with ramification in $L_{a,D}$.
It will also be a field.

For example, if $D=\{3,4\} $ then $L_{a,D}$ 
is exactly the closure of $L_a$ under abelian degree $2$ and $3$ extensions, since
$\PGL_2(\mathbb F_2) = \mathfrak S_3$ and $\PGL_2(\mathbb F_3)= \mathfrak S_4$ and both groups
 are solvable with abelian quotients of exponent $3,2$.
 \end{rema}

 On $(\Sym^4 (\P^1(\bar \Q))\setminus \Delta)/ \PGL_2(\bar \Q)$ 
 we can define a directed graph structure $DGS$, postulating that  
 $$
 r_z= \{ z_1,z_2,z_3,z_4\} \ra r_w= \{ w_1,w_2,w_3,w_4\}
 $$
 if there is an elliptic curve $E'$ isogeneous to $E_{r_z}$ such that $r_w$ is projectively
equivalent to a subset in $\pi(E'[\infty])$.
Any path in the graph is equivalent to a path contained
in $(\Sym^4 (\P^1(L(E)))\setminus \Delta)/ \PGL_2(\bar \Q)$, for some $E$.
The graph contains cycles, periodic orbits, and preperiodic orbits, i.e., 
paths which at some moment end in  periodic orbits.

\begin{ques}
Consider the field $L_0= L_{r_0}$ for $r_0=\{ 0,1,-1,\infty\} $.
Does  
$$
(\Sym^4(\P^1(L(E)))\setminus \Delta)/ \PGL_2(L(E))
$$ 
consist
of one cycle in $DGS$?
Note that any path beginning from $r_0$
extends
to a cycle (in many different ways) since $r_0$
is $\PGL_2$-equivalent to a four-tuple of points of order $4$
on any elliptic curve.
\end{ques}

\begin{rema}
In Step 7, we have used algebraicity of $L_a/L$, and we do not know how to
extend the proof to geometric fields. What are the properties of $L_a$ in 
geometric situations, when $a$ is transcendental over $k$? 
\end{rema}

We have seen in the proof that the field $L_a$ is closed under extensions of degree 2. We also have: 

\begin{lemm}
For any $b\in L_a$, we have $\sqrt[3]{b}\in L_a$.
\end{lemm}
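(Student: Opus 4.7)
The plan is to exploit a classical identity between the discriminant $\Delta_E$ of an elliptic curve and the depressed resolvent cubic of its $3$-division polynomial. For $E:y^2=x^3+Ax+B$ with $\Delta_E=-16(4A^3+27B^2)$, the $3$-division polynomial $\psi_3(x)=3x^4+6Ax^2+12Bx-A^2$ is, after division by $3$, a depressed quartic $x^4+px^2+qx+r$ with $(p,q,r)=(2A,\,4B,\,-A^2/3)$. Its classical resolvent cubic $y^3+2py^2+(p^2-4r)y-q^2$, whose roots are the squares $(\alpha+\beta)^2,(\alpha+\gamma)^2,(\alpha+\delta)^2$ of pair-sums of the roots $\alpha,\beta,\gamma,\delta$ of $\psi_3$, becomes under the shift $y\mapsto y-4A/3$ the pure equation
\[
y^3 \;=\; \frac{16(4A^3+27B^2)}{27} \;=\; -\frac{\Delta_E}{27}.
\]

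Given $b\in L_a$, I will take the Legendre curve $E_\lambda:y^2=x(x-1)(x-\lambda)$, whose discriminant is $16\lambda^2(1-\lambda)^2$, and arrange $-\Delta_{E_\lambda}/27=b$ by solving $\lambda(1-\lambda)=\pm\tfrac{3}{4}\sqrt{-3b}$. Since $L_a$ is closed under square roots by Steps~4--5 of the preceding proof, the right-hand side $s$ lies in $L_a$, and the quadratic $\lambda^2-\lambda+s=0$ then yields $\lambda=(1\pm\sqrt{1-4s})/2\in L_a$; choosing signs so that $\lambda\notin\{0,1\}$ (possible whenever $b\neq 0$, while the case $b=0$ is trivial) produces a nondegenerate elliptic curve $E_\lambda$ with ramification $\{0,1,\lambda,\infty\}\subset L_a$.

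By the defining property of $L_a$ we have $\pi(E_\lambda[3])\subset L_a$, i.e.\ all four roots of $\psi_3$ lie in $L_a$. The three resolvent roots, being polynomial expressions in these four roots plus $A\in L_a$, then also lie in $L_a$; after the depression by $4A/3\in L_a$ they remain in $L_a$, and they are precisely $\zeta_3^i\sqrt[3]{b}$ for $i=0,1,2$. Since $\zeta_3\in L_a$, this forces $\sqrt[3]{b}\in L_a$.

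The essential computation is the resolvent-cubic identity $y^3=-\Delta_E/27$: the simultaneous vanishing of the $y^2$ and $y$ coefficients after the shift is a slightly miraculous cancellation which is the main obstacle to check. Conceptually, it reflects that the unique cubic quotient $\mathfrak S_4\twoheadrightarrow\mathfrak S_3$ of the $3$-torsion projective Galois group $\PGL_2(\F_3)\cong\mathfrak S_4$ realizes the extraction of a cube root of (a constant multiple of) the discriminant of the ramification cubic; and it is precisely this quotient that is hidden inside $L_a$.
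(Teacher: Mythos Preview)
Your proof is correct and rests on the same core idea as the paper's: the resolvent cubic of the $3$-division polynomial of an elliptic curve with ramification in $\tilde L_a$ becomes, after an affine shift, a pure equation $y^3=c$, forcing $\sqrt[3]{c}\in L_a$. The paper takes the curve ramified at $\{b,\sqrt b,-\sqrt b,\infty\}$, finds $c=(b-1)^2$, and then closes with the substitution $b\mapsto b+1$ and a division; you instead solve a preliminary quadratic to choose a Legendre parameter $\lambda\in L_a$ with $-\Delta_{E_\lambda}/27=b$, so that $c=b$ on the nose. Your packaging is more conceptual---identifying the pure-cube constant as (a scalar multiple of) the discriminant explains \emph{why} the linear coefficient cancels and works uniformly over any short Weierstrass model, in line with your $\mathfrak S_4\twoheadrightarrow\mathfrak S_3$ remark---while the paper's choice is more ad hoc but avoids the preparatory quadratic.

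One small presentational wrinkle: your $\psi_3$ is written for the depressed model $y^2=x^3+Ax+B$, whereas the defining property of $\tilde L_a$ gives you the $3$-torsion $x$-coordinates in the Legendre model. You are implicitly translating by $(1+\lambda)/3\in L_a$ before applying the resolvent; this is harmless since $L_a$ is a field, but it would be worth saying so explicitly.
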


\begin{proof} 
Consider a curve $E_r$ with $r:= \{ b, \sqrt b, -\sqrt b, \infty\}$. 
Its 3-division polynomial takes the form:
$$
f_3(x) = 3x^4 - 4b x^3 - 6b x^2 + 12 b^2 x - 4 b^3 - b^2.
$$
We can represent it as a product:
$3\prod_{i=1}^4 (x-x_i)$, where the set $\{ x_i\} \subset L_a$ is equal
$\pi(E_r[3])$. The corresponding cubic resolvent
$$
rc(x):=\prod (x- (x_ ix_j+ x_k x_l)),
$$ 
where $(i,j),(k,l)$ is any splitting
into pairs of indices among $1,2,3,4$.
In terms of $b$, we have
$$
rc(x)= x^3 + 2bx^2 + 4b^2 x/3  + 8b^3/3 - 128 b^4/27 + 64 b^5/27.
$$
Since the set $\{ x_i\}$ is projectively equivalent to $\{ 0,1,\zeta_3,\zeta_3^2\}$, 
we can see that the cubic polynomial above
has the form $C (x^3 + B)$, for some constants $C,B$.
It can be checked that 
$$
rc( 2b(2x-1)/ 3 ) = (4b/3)^3 (x^3 + (b-1)^2).
$$
After a projective map in $\PGL_2(L_a)$ we can transform the
the elements $x_ix_j+ x_kx_l$ into $-\sqrt[3]{(b-1)^2}$.
Hence $-\sqrt[3]{(b-1)^2}\in L_a$, for any $b\in L_a$; since
 $L_a$ is a field  closed under $2$-extensions we obtain the claim. 
\end{proof}

This raises a natural
 
\begin{ques}
Is $L_a$ is closed under taking roots of arbitrary degree?
\end{ques}

If we add $\mathbb G_m$ to the set of allowed elliptic curves then the answer
is affirmative. However, there may exist a purely {\em elliptic} substitute
for obtaining roots of prescribed order.

\begin{coro}
If the $j(E)\in L_a$ then any set $\{ b,-b,b^{-1},-b^{-1}\} $ with $\mu_4((b,-b,b^{-1},-b^{-1}))=j(E)$
is contained in $L_a$. Note that such $b$ are solutions
of a cubic equation. Thus $L_a$ depends only
on the curve $E$ and we will write $L(E)$. 
\end{coro}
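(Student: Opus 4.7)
The plan is to exhibit an explicit algebraic relation between $j(E)$ and $b$, and then to extract $b$ from $j(E)$ using the closure properties of $L_a$ that have already been established in this section.

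\medskip

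First, I would parametrize the 4-tuple by the Klein four invariant $u:=b^{2}+b^{-2}$, which is preserved under $b\mapsto -b$ and $b\mapsto b^{-1}$. A direct cross-ratio computation for $(b,-b,b^{-1},-b^{-1})$ gives
$$
\lambda=\frac{(b-b^{-1})^{2}}{(b+b^{-1})^{2}}=\frac{u-2}{u+2},
$$
and substituting into $j=256(\lambda^{2}-\lambda+1)^{3}/(\lambda(\lambda-1))^{2}$ simplifies to
$$
j(E)=\frac{16(u^{2}+12)^{3}}{(u^{2}-4)^{2}}.
$$
Setting $v:=u^{2}$, this becomes the cubic equation $16(v+12)^{3}-j(E)(v-4)^{2}=0$ in $v$, with coefficients in $\Q(j(E))\subseteq L_{a}$. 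This is the cubic referenced in the statement.

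\medskip

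Next, I would solve this cubic inside $L_{a}$. Recall that $L_{a}$ is closed under square roots (Steps 4--5 in the proof of Theorem~\ref{thm:field}), contains all cube roots of unity since $\Q^{\mathrm{ab}}\subseteq L_{a}$, and is closed under cube roots of its elements by the lemma immediately preceding this corollary. Cardano's formula then expresses each of the three roots $v$ in terms of one square root, two cube roots, multiplication by $\zeta_{3}$, and arithmetic operations, all applied to elements of $L_{a}$; hence every root $v$ lies in $L_{a}$. Three further square-root extractions recover $b$: from $v\in L_{a}$ we get $u=\pm\sqrt{v}\in L_{a}$; $b^{2}$ is a root of $x^{2}-ux+1=0$ with discriminant $v-4\in L_{a}$, so $b^{2}=(u\pm\sqrt{v-4})/2\in L_{a}$; finally $b=\pm\sqrt{b^{2}}\in L_{a}$. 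Therefore $\{b,-b,b^{-1},-b^{-1}\}\subseteq L_{a}$, and the conclusion that $L_{a}$ depends only on $E$ is immediate: any two 4-tuples of the prescribed shape with the same image under $\mu_{4}$ must both sit inside $L_{a}$.

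\medskip

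The only non-routine step is the cubic, whose sole potential obstruction is general cube-root closure of $L_{a}$. This is precisely why the preceding lemma was established; combined with the presence of $\zeta_{3}\in L_{a}$, Cardano's formula closes within $L_{a}$ and no further difficulty arises. Everything else reduces to quadratic extractions, which are handled by Steps 4--5 of the main theorem.
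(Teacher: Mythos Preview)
Your argument is correct and fills in precisely what the paper leaves implicit: the corollary is stated without proof, relying on the cube-root lemma just proved and the square-root closure from Step~4 of Theorem~\ref{thm:field}, and your Cardano reduction is the natural way to make this explicit. One small remark: Step~4 alone already gives closure of $L_a$ under square roots, so you need not invoke Step~5; and for $\zeta_3\in L_a$ it suffices to note $\zeta_3=(-1+\sqrt{-3})/2$ with $\sqrt{-3}\in L\subseteq L_a$, rather than appealing to $\Q^{\mathrm{ab}}\subseteq L_a$.
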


It is also easy to see that $L(E)=L(E')$ if $E$ and $E'$ are isogenous.

\section{Intersections}
\label{sect:inter}

In this section we present further results concerning intersections 
$$
\pi_1(E_1[\infty])\cap \pi_2(E_2[\infty])
$$ 
for
different  elliptic curves $E_1,E_2$ and
provide evidence for the Effective Finiteness Conjecture \ref{conj:main}.

  \begin{prop}
  \label{prop:ar}
 Assume that 
 \begin{equation}
 \label{eqn:eqb}
 \pi_1(E_1[4])=\pi_2(E_2[4])= \{ 0,1, -1, i, -i, \infty\} 
 \end{equation}
 and that 
 $$
\#\{  \pi_1(E_1[3])\cap \pi_2(E_2[3])\} \ge 2.
 $$
Then $r_1=r_2$ and $E_1 = E_2$. 
 \end{prop}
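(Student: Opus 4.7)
The plan is to derive an explicit polynomial $Q_a(p)$ whose roots are the elements of $\pi(E_{r_a}[3])$, and then to exploit the fact that $Q_a(p)$ is only quadratic in the parameter $a$.

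First, I would parameterize each $(E_i,\pi_i)$ satisfying $\pi_i(E_i[4]) = \{0, \pm 1, \pm i, \infty\}$ by $a_i$ so that $r_{a_i} = \{a_i, -a_i, 1/a_i, -1/a_i\}$ and $E_{r_{a_i}}\colon y^2 = (x^2 - a_i^2)(x^2 - 1/a_i^2)$. The M\"obius change of variables $T(x) = \mu(x+1/a)/(x-1/a)$ with $\mu = (a^2-1)/(a^2+1)$ carries $r_a$ to $\{0, 1, \mu^2, \infty\}$, realizing $E_{r_a}$ as the Legendre curve with parameter $\lambda = \mu^2$. Pulling back the classical Legendre $3$-division polynomial $3\xi^4 - 4(1+\lambda)\xi^3 + 6\lambda\xi^2 - \lambda^2$ and simplifying yields
$$
Q_a(p) := a p^4 + 2 p^3 - 2 a^2 p - a = 0
$$
as the defining equation for $p \in \pi(E_{r_a}[3])$.

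The crucial observation is that $Q_a(p)$, regarded as a polynomial in $a$, is only quadratic: $-2p\, a^2 + (p^4-1)\, a + 2p^3 = 0$. By Vieta's formulas, whenever $p$ lies in the fiber over two distinct values $a_1 \ne a_2$,
$$
a_1 a_2 = -p^2, \qquad a_1 + a_2 = \frac{p^4 - 1}{2p}.
$$
Now assume for contradiction that $r_1 \ne r_2$ yet $p_1 \ne p_2$ both lie in $\pi_1(E_1[3]) \cap \pi_2(E_2[3])$. Since each fiber contains at most two elements and both fibers contain $\{a_1, a_2\}$ with $a_1 \ne a_2$, the fibers over $p_1$ and $p_2$ coincide, so the Vieta products match: $p_1^2 = p_2^2$, forcing $p_2 = -p_1$. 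The sum relation then yields $(p_1^4-1)/(2p_1) = -(p_1^4-1)/(2p_1)$, hence $p_1^4 = 1$, i.e., $p_1 \in \{\pm 1, \pm i\}$. But this set is contained in $\pi(E[4])$, which is disjoint from $\pi(E[3])$ since exact $3$- and exact $4$-torsion have coprime orders and their $\iota$-orbits cannot coincide. This contradicts $p_1 \in \pi_1(E_1[3])$, so $a_1 = a_2$, whence $r_1 = r_2$ and $E_1 = E_2$.

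The main technical step is the explicit derivation of $Q_a(p)$ via the Legendre pullback and the verification that its $a$-degree is exactly $2$; once this algebraic structure is in place, the Vieta computation together with the disjointness $\pi(E[3]) \cap \pi(E[4]) = \emptyset$ closes the proof rapidly.
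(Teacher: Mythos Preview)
Your proof is correct and takes essentially the same approach as the paper: both exploit that the $3$-division polynomial is quadratic in the parameter $a$, then derive a contradiction (you via Vieta's formulas, the paper via proportionality of coefficients) from the assumption that two distinct $a$-values share two roots. Your $Q_a$ differs from the paper's $3$-division polynomial by the harmless substitution $a\leftrightarrow 1/a$ (reflecting a different choice of origin among the four branch points), and you finish via $p^4=1$ together with the disjointness of $\pi(E[3])$ and $\pi(E[4])$, whereas the paper reaches the contradiction $-1=1$ directly from the coefficient ratios.
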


\begin{proof} 
By our assumption \eqref{eqn:eqb}, 
$E_i$ are given by the equation 
$$
y^2=x^4- t_ix^2 + 1.
$$
With $a_i$ defined by
$$
r_i = \{ a_i,-a_i,a_i^{-1},-a_i^{-1} \},
$$
we have
$$
t_i= a^2_i + a^{-2}_i.
$$
We assume that $\pi_i(e_i)=a_i$. 
In this case, points $\pi_i(E_i[3])\subset \bar{\Q}\subset \P^1$ are 
the roots of
\begin{equation}
\label{eqn:aaa}
x^4+2a x^3-(2/a)x-1=0 
\end{equation}
or, equivalently, 
$$
2x^{3}a^{2}+(x^{4}-1)a-2x.
$$
If $x,y\in\pi_{a_{1}}(E_{a_{1}}[3])\cap\pi_{a_{2}}(E_{a_{2}}[3])$,
where $x\neq y$ and $a_{1}\neq a_{2}$, then $a_{1}$
and $a_{2}$ are the roots of $2x^{3}a^{2}+(x^{4}-1)a-2x$
and of $2y^{3}a^{2}+(y^{4}-1)a-2y$, that means that their coefficients are proportional
\[
\frac{2x^{3}}{2y^{3}}=\frac{x^{4}-1}{y^{4}-1}=\frac{-2x}{-2y}.
\]
Then, on the one hand, 
$x^{3}/y^{3}=x/y$ implies $x^{2}=y^{2}$, and hence $x=-y$, by our assumption that $x\neq y$. 
On the other hand, 
$$
x/y=-1 = (x^{4}-1)/(y^{4}-1)=1,
$$
a contradiction.
\end{proof}

 Given any $x\in \bar{\Q}$ 
 we obtain $a_i=a_i(x)$, $i=1,2$, which satisfy \eqref{eqn:aaa}.
 Then the resulting elliptic curves $E_i$ satisfy
 \eqref{eqn:eqb} and we have
$$
\#\{  \pi_1(E_1[3])\cap \pi_2(E_2[3])\}  = 1.
 $$
 unless 
    $$
    (x^4-1)^2 + 16x^4 = x^8 + 14x^4 + 1= 0
    \quad \text{ or }\quad  x^4= -7 \pm 4\sqrt{3}.
    $$
Moreover, 
\begin{equation}
\label{eqn:in}
  \#\{ \pi_{a_1}(E_{a_1}[\infty])\cap \pi_{a_2}(E_{a_2}[\infty])\} = 6 + 4 n\ge 10,
\end{equation}
where $6$ 
is the number of images of common  points of order $4$ (from Equation~\ref{eqn:eqb})
and $4$ stands for the size of $(\Z/2\oplus \Z/2)$-orbit of a point in $\P^1$.
However, it may happen that the inequality in \eqref{eqn:in} is strict. 
 
\begin{exam}
Consider the polynomial $f_5(x ,a )$ defined in \cite[Theorem 18]{BF}). Its roots are exactly
$\pi_a(E_a[5])$.  It has degree $12$ with respect to $x$ and $6$ with respect to $a$.
The polynomial $f_3(x,a)$ has degree $2$ with respect to $a$ and generically has exactly
two solutions $a_1(z),a_2(z)$, for any given $z$.
We want also $f_5(v, a_i(z))= 0$ for some $v$ and $z$.
This is equivalent to $f_5(v, a)$ being divisible by $f_3(z, a)$, as polynomials in $a$.
Writing division with remainder
$$
f_5(v,a)= g(a)f_3(z,a) + C(v,z)a + C'(v,z) 
$$
for some explicit polynomials $C$, and $C'$, which have to vanish.  
This condition is gives an explicit polynomial in $u$, which is divisible by a high power of $u$ and $(u-1)$.
Excluding the trivial solutions $u=0,1$, and substituting $t=u^4$ we obtain the equation  
\begin{eqnarray*}
 & & 32u^{24}+ 1369u^{20}+18812u^{16}+90646u^{12}+18812u^{8}+1369u^{4}+32\\
 & = & 32t^{6}+1369t^{5}+18812t^{4}+90646t^{3}+18812t^{2}+1369t+32\\
 & = & t^{3}\left[32\left(t^{3}+\frac{1}{t^{3}}\right)+1369\left(t^{2}+\frac{1}{t^{2}}\right)+18812\left(t+\frac{1}{t}\right)+90646\right]
 \end{eqnarray*}
 Since $t\neq 0$,  we have
\begin{eqnarray*} 
% && 32 \left(t+\frac{1}{t}\right)^{3}- 96\left(t+\frac{1}{t}\right) +1369\left[\left(t+\frac{1}{t}\right)^{2}-2\right]+18812\left(t+\frac{1}{t}\right)+90646\\
& =& 32\left(t+\frac{1}{t}\right)^{3}+1369\left(t+\frac{1}{t}\right)^{2}+18716\left(t+\frac{1}{t}\right)+87908\\
&=& 32r^{3}+1369r^{2}+18716r+87908\\
 &=:&  f(r)
\end{eqnarray*}
Computing the discriminant of this cubic polynomial, we find that it has no multiple roots. 
Its solutions give rise to  pairs $u,v$ such that
 for $a_1:=a_1(u),a_2:=a_2(u)$ we have 
 $$
 f_5(v, a_i) =  f_3(u, a_i) =0 
 $$
and hence
$$
\#\{     \pi_{a_1}(E_{a_1}[\infty])\cap \pi_{a_2}(E_{a_2}[\infty])\} \geq 14.
$$
The symmetry of the above equation reduced the problem to a cubic equation with coefficients in $\Q$, 
followed by a quadratic equation. The roots can be expressed in closed form and hence we get explicit 
description for the 24 roots $u$.  
\end{exam}

    The same scheme can be applied to points of higher order.
    Indeed we have a polynomial $f_n(u,x)= 0$ which has
    increasing degree with respect to $u$, and the
    existence of a pair  $u,v$ such that  $f_n(v,x)= 0$ is divisible
    by $f_3(u,x)$ depend on the divisibilty of $f_n(v,x)$ by  $f_3(u,x)$.
    Using long division we obtain two polynomials
    $C_{0,n}( u,v)$ and $C_{1,n}( u,v)$ so that their common
    zeroes $(u,v)$ correspond to pairs $( u,v)$
    with  $f_3(u,x)= 0$ and $f_n(v,x)= 0$ simulaneously.

\begin{exam}
Applying  this scheme to  points of order $3$ and $7$ (or 3 and 11, 3 and 13, 3 and 17)
we obtain that the corresponding resultant has
roots of multiplicity three which implies the existence
of three points $v$ for a given $u$ with
 $f_3(u,x)= 0$ and $f_7(v,x)= 0$ and hence
 $$
  \#\{ \pi_1(E_1[\infty])\cap \pi_2( E_2[\infty])\}  \ge 6 + 16 = 22.
  $$
\end{exam}

 Since we have every reason to expect
 polynomials $C_{0,n}( u,v)$ and $C_{1,n}( u,v)$ to have increasing
 number of intersection points with the growth of $n$ we
 are led to the following conjecture:

 \begin{conj} 
 There is an infinite dense subset
 of points $a\in \P^1$ such that 
 $$
 \pi_a(E_{a}[\infty])\cap \pi_{a_2}(E_{a_2}[\infty]) \ge 22
 $$
 with  
 $$
 \pi_a(E_{a}[3])\cap \pi_{a_2}(E_{a_2}[3]) \neq 0.
 $$
 \end{conj}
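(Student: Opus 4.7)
The plan is to iterate the resultant construction of the preceding example across an infinite family of primes $n$, producing for each a nonempty finite solution set, and then to argue that the union of these sets is infinite; since any infinite subset of $\P^1$ is automatically Zariski dense, the density clause then comes for free.

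Fix a prime $n\ge 7$ and perform Euclidean division of the $n$-division polynomial by $f_3(u,a)$ in the indeterminate $a$:
\[
f_n(v,a) = g(u,v,a)\, f_3(u,a) + C_{1,n}(u,v)\, a + C_{0,n}(u,v).
\]
A common zero $(u_0,v_0) \in V(C_{0,n},C_{1,n}) \subset \A^2$ forces $f_3(u_0,a) \mid f_n(v_0,a)$. The two roots $a_1(u_0), a_2(u_0)$ of the quadratic $f_3(u_0,a)$ then give elliptic curves $E_{a_1}, E_{a_2}$ sharing $u_0$ as an image of a $3$-torsion point and $v_0$ as an image of an $n$-torsion point. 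Combining with the six common images of $4$-torsion from \eqref{eqn:eqb} and the $(\Z/2\oplus\Z/2)$-orbit of $v_0$ of size four, one obtains
\[
\#\bigl(\pi_{a_1}(E_{a_1}[\infty]) \cap \pi_{a_2}(E_{a_2}[\infty])\bigr) \ge 6 + 4 \cdot 4 = 22.
\]

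Let $S_n \subset \P^1(\bar{\Q})$ be the finite set of values $a_1(u_0), a_2(u_0)$ arising from points $(u_0,v_0)$ of $V(C_{0,n},C_{1,n})$. The core of the argument is then to show $S := \bigcup_n S_n$ is infinite. Assume to the contrary that $S$ is finite; since $u_0 \mapsto \{a_1(u_0), a_2(u_0)\}$ is finite-to-one, only finitely many $u_0$-values arise, and by pigeonhole some fixed $u_0^\ast$ belongs to $V(C_{0,n},C_{1,n})$ for infinitely many $n$. For this $u_0^\ast$, the single pair of (generically nonisomorphic) curves $E_{a_1(u_0^\ast)}, E_{a_2(u_0^\ast)}$ would share an image of an $n$-torsion point for infinitely many $n$, making $\pi_1(E_1[\infty]) \cap \pi_2(E_2[\infty])$ infinite and contradicting the theorem of \cite{BT-small}. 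Hence $S$ is infinite.

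The main obstacle is to establish nonemptiness of $V(C_{0,n},C_{1,n})$ for infinitely many $n$. Direct computation covers $n \in \{7,11,13,17\}$ as in the preceding example, but an a priori argument is required for an infinite family. The natural route is to view $V(C_{0,n},C_{1,n})$ as the image of a modular correspondence on $X(3) \times X(n)$ parametrizing pairs of elliptic curves whose $3$- and $n$-torsion images coincide at a point, and to use the quadratic growth of $\deg_v f_n$ together with surjectivity of the associated Galois representations to extract nontrivial common zeros for $n$ large. A cleaner substitute, avoiding multi-prime coordination, would fix $n=7$ and vary instead an auxiliary isogeny parameter, deforming a single solution into a one-parameter family and thereby producing the required infinite set directly.
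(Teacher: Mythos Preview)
This statement is a \emph{conjecture} in the paper, not a proved result; the authors explicitly introduce it with ``we are led to the following conjecture'' and give no proof. So there is no paper proof to compare against, and your proposal should be read as an attempted resolution of an open problem.

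Your argument has two genuine gaps.

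\textbf{Counting error.} A single common zero $(u_0,v_0)$ of $C_{0,n}$ and $C_{1,n}$ gives one common $3$-torsion image $u_0$ and one common $n$-torsion image $v_0$. In the paper's notation (see the inequality~\eqref{eqn:in}) this yields
\[
\#\bigl(\pi_{a_1}(E_{a_1}[\infty])\cap\pi_{a_2}(E_{a_2}[\infty])\bigr)\ \ge\ 6+4\cdot 2\ =\ 14,
\]
not $22$. The paper reaches $22$ in the $n=7$ example only because the resultant there has roots of multiplicity three, producing \emph{three} distinct values $v$ for the same $u$; together with $u$ that gives four extra points and $6+4\cdot 4=22$. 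Your construction does not secure three $v$'s per $u$, and there is no reason to expect this multiplicity phenomenon for a general prime $n$. So even when $V(C_{0,n},C_{1,n})$ is nonempty, your argument as written only delivers $\ge 14$.

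\textbf{The acknowledged nonemptiness gap.} You correctly identify that the crux is showing $V(C_{0,n},C_{1,n})\neq\emptyset$ (with nondegenerate solutions, i.e., $a_1(u_0)\neq a_2(u_0)$) for infinitely many $n$, but neither of your suggested routes is carried out. The ``modular correspondence on $X(3)\times X(n)$'' sketch is not an argument: degree growth of $f_n$ does not by itself force common zeros of the two remainder polynomials away from the degenerate locus. The alternative ``fix $n=7$ and vary an isogeny parameter'' is a slogan, not a construction; you would need to exhibit an actual one-parameter family of solutions, and nothing in the paper or your text does so.

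In short, your proposal is a reasonable heuristic outline in the spirit of the paper's own motivation for the conjecture, but it does not close either the arithmetic gap (existence of solutions for infinitely many $n$) or the combinatorial gap (getting $22$ rather than $14$). The conjecture remains open.
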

 Note that in all such cases the fields $L_{a} = L_{a_2}$.

\section{General Weierstrass families}
\label{sect:gen-fam}

The family of elliptic curves considered in Section~\ref{sect:inter}
is the most promising for obtaining large intersections of torsion points.
In this section, we consider other families where the intersections
tend to be smaller, following \cite{BF}.

We consider elliptic curves $E_a$ with the same 
$$
\pi_a(e_a)=\infty\in \P^1.
$$
These are given by their Weierstrass form
\begin{equation}
\label{eqn:wei}
y^2 = x^3 + a_2x^2+ a_4 x + a_6. 
\end{equation}
Using formulas in, e.g.,  \cite[III, Section 2]{knapp}, we write down
(modified) division polynomials
$f_{n, a}$, whose zeroes are {\em exactly} $\pi_a(E_a[n])$:
$$ 
f_{n,a}(x) =\sum_{0\le r,s,t, r+ 2s +3t \leq d(n)} c_{r,s,t}(n) a_2^r a_4^s a_6^t x^{d(n)- (r+ 2s +3t)},
$$ 
where $d(n)$ and the coefficients $c_{r,s,t}(n)$ 
can be expressed via totient functions 
$J_k(n)$, with $d(n) = J_2(n)/2$, if $n> 2$, and $d(2)=3$ (see \cite{BF}).

\begin{lemm} 
\label{lemm:fn}
Let $E_1, E_2$ be elliptic curves in generalized 
Weierstrass form \eqref{eqn:wei} such that, for some $n>1$ we have
$$
\pi_1(E_1[n])=\pi_2(E_2[n]). 
$$
Then $E_1\simeq E_2$. 
\end{lemm}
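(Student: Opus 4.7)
The plan is to use the weighted-homogeneous structure of the division polynomial $f_{n,a}(x)$ in the Weierstrass coefficients $(a_2,a_4,a_6)$ to reconstruct those coefficients from $\pi_a(E_a[n])$. Working in characteristic $0$, the multiplication-by-$n$ map is separable, so $f_{n,a}$ has $d(n)$ distinct roots which are exactly $\pi_a(E_a[n])$, and the leading coefficient $c_{0,0,0}(n)$ is independent of $a$. Hence the hypothesis upgrades the equality of zero sets to the polynomial identity
$$f_{n,a^{(1)}}(x)=f_{n,a^{(2)}}(x)\in\bar k[x],$$
where $a^{(i)}=(a_2^{(i)},a_4^{(i)},a_6^{(i)})$ records the Weierstrass data of $E_i$.

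Grouping terms of $f_{n,a}$ by the power of $x$ gives
$$f_{n,a}(x)=c_{0,0,0}(n)\,x^{d(n)}+\sum_{k\ge 1}P_k(a)\,x^{d(n)-k},\qquad P_k(a)=\sum_{r+2s+3t=k}c_{r,s,t}(n)\,a_2^r a_4^s a_6^t,$$
each $P_k$ weighted homogeneous of weight $k$ in $(a_2,a_4,a_6)$ under the weights $(1,2,3)$. The identity above forces $P_k(a^{(1)})=P_k(a^{(2)})$ for every $k\ge 1$. The first three are
\begin{align*}
P_1(a)&=c_{1,0,0}(n)\,a_2,\\
P_2(a)&=c_{2,0,0}(n)\,a_2^2+c_{0,1,0}(n)\,a_4,\\
P_3(a)&=c_{3,0,0}(n)\,a_2^3+c_{1,1,0}(n)\,a_2 a_4+c_{0,0,1}(n)\,a_6.
\end{align*}
Once one knows that $c_{1,0,0}(n)$, $c_{0,1,0}(n)$, $c_{0,0,1}(n)$ are all nonzero, the three equations $P_j(a^{(1)})=P_j(a^{(2)})$ for $j=1,2,3$ solve in succession for $a_2$, then $a_4$, then $a_6$; hence $a^{(1)}=a^{(2)}$ and $E_1=E_2$, which in particular implies $E_1\simeq E_2$.

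The main obstacle is verifying the nonvanishing of $c_{1,0,0}(n)$, $c_{0,1,0}(n)$, $c_{0,0,1}(n)$ for every $n>1$. For $n=2$ one has $f_{2,a}(x)=x^3+a_2x^2+a_4x+a_6$ and all three equal $1$; for $n=3$ the explicit $3$-division polynomial $3x^4+4a_2x^3+6a_4x^2+12a_6x+(\cdots)$ yields $4$, $6$, $12$. For larger $n$ one proceeds inductively, using the classical recursion for $\psi_n$ (for instance $\psi_{2m+1}=\psi_{m+2}\psi_m^3-\psi_{m-1}\psi_{m+1}^3$ and its even analog) together with the procedure that extracts $f_{n,a}$ from $\psi_n$ by dividing out contributions from proper divisors of $n$; the required nonvanishing is established by the explicit computation of division polynomials carried out in \cite{BF}.
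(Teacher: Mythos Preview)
Your argument is essentially the same as the paper's: both identify the coefficient of $x^{d(n)-k}$ in $f_{n,a}$ as a weighted-homogeneous polynomial of weight $k$ in $(a_2,a_4,a_6)$, and both recover $a_2,a_4,a_6$ successively from the coefficients in weights $1,2,3$. The paper is terser---it simply asserts that for $n>2$ one has $d(n)\ge 4$ and that comparing coefficients forces the monomials $a_2^r a_4^s a_6^t$ with $r+2s+3t\le 3$ to agree---whereas you make explicit the nonvanishing of $c_{1,0,0}(n)$, $c_{0,1,0}(n)$, $c_{0,0,1}(n)$ that this step actually requires, and you indicate how to check it (small cases by hand, general case via the division-polynomial recursion and \cite{BF}). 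This extra care is to your credit; otherwise the two proofs coincide.
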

 
 \begin{proof}
The statement is trivial for $n=2$. 
For $n > 2$, we have $d(n) \geq 4$, the comparison of division polynomials
implies that the terms
$$
 a_2^r a_4^s a_6^t, \quad r+ 2s +3t \leq 3 
$$ 
must be equal. For 
$$
(r,s,t)=(0,0,1), (0,1,0), (1,0,0)
$$ 
we find equality of coefficients $a_i$ for both curves. 
\end{proof}

  Often, already the existence of nontrivial
  intersections 
 \begin{equation}
 \label{eqn:eee}
 \pi_1(E_1[n]) \cap \pi_2(E_2[n]) \ge 1
 \end{equation}
  leads to the
  isomorphism of curves $E_1,E_2$. 
For example, if both curves are defined over a number field $k$ and
the action of the absolute Galois group $G_k$ on $\pi_1(E_1[n])$
and $\pi(E_2[n])$ is transitive then \eqref{eqn:eee}  implies that $E_1\simeq E_2$.
For many, but not all, $n\in \N,$ the equality of totient functions $J_2(n)= J_2(m)$,  for some $m\in \N$,  implies $n=m$.

%However, in general, $J_2(n)= J_2(m)$ does not imply $n=m$.
%Using arguments from  analytic number theory,  
%Prof. K. Ford (private communication) indicated that for any finite set of totient functions $J_{k_i}$ there is  an arbitrarily
%large number of $n_j\in \N$ such that $J_{k_i}(n_j)$ are equal, 
 %for any $k_i$.
%In particular, $J_2(5)= J_2(6)$ even though the sizes of such coprime
%pairs with equal $J_2$ increase very rapidly.

\begin{exam}
There exist many tuples $(m,n)$ for which 
$$
J_2(m)=J_2(n)\quad \text{ and } \quad J_1(m)\neq J_1(n).
$$ 
For example, 
$$
J_2(5)=J_2(6)\quad \text{ but } \quad J_1(5) = 4, \quad J_1(6)= 2.
$$ 
We also have 
$$
J_2(35)=J_2(40)= J_2(42),  \text{ while } J_1(35)= 24,  J_1(40)=16,  J_1(42)=12.
$$  
On the other hand, we have 
$$
J_2(15)=J_2(16)=192 \quad \text{ and } \quad J_1(15)=J_1(16)=8.
$$
 \end{exam}

These results indicate a relation of our question
to Serre's conjecture. He considered the action of the Galois group 
on torsion points of an elliptic curve  $E$ defined over a number field $k$.
If $E$ does not have complex multiplication, then 
the image of the absolute Galois group $G_k$ is an open subgroup of $\GL_2(\hat \Z)$, i.e., of finite
index.

\begin{conj}[Serre]
For any number field  $k$ there exists a constant $c=c(k)$ such that for every 
non-CM elliptic curve $E$ 
over $k$ the index of the image of the Galois group $G_k$ in $\GL_2(\hat \Z)$
is smaller than $c$.
\end{conj}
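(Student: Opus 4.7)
The plan is to follow the standard reduction of Serre's uniformity conjecture to control of rational points on a small list of modular curves, with the caveat that the conjecture remains open even over $\Q$. First I would separate the statement into two parts: (a) for all but finitely many primes $\ell$, the mod-$\ell$ representation $\bar\rho_{E,\ell}\colon G_k\to\GL_2(\F_\ell)$ is surjective, with $\ell>c_1(k)$; and (b) for the finitely many remaining primes, and for the $\ell$-adic levels above mod $\ell$, the index in $\GL_2(\Z_\ell)$ is bounded. Part (b) is essentially formal: for $\ell\ge 5$ any closed subgroup of $\GL_2(\Z_\ell)$ whose mod-$\ell$ reduction is $\GL_2(\F_\ell)$ must equal $\GL_2(\Z_\ell)$ (standard pro-$\ell$ group theory), and at the finitely many exceptional primes one invokes Serre's open image theorem with an effective bound on the index. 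The heart of the matter is part (a).

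For (a), Dickson's classification of maximal subgroups of $\GL_2(\F_\ell)$ implies that a non-surjective image with full determinant is contained, up to scalars, in one of: a Borel subgroup, the normalizer of a split Cartan, the normalizer of a non-split Cartan, or an exceptional subgroup whose image in $\PGL_2(\F_\ell)$ is isomorphic to $A_4$, $S_4$, or $A_5$. Each possibility defines a modular curve over $k$ --- $X_0(\ell)$, $X_{\mathrm{sp}}^+(\ell)$, $X_{\mathrm{ns}}^+(\ell)$, or one of finitely many exceptional modular curves --- whose non-cuspidal, non-CM $k$-rational points parametrize the bad elliptic curves. The exceptional case is ruled out for $\ell$ large in terms of $[k:\Q]$ by Serre's classical argument using orders of Frobenius elements modulo $\ell$. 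The Borel case over $\Q$ is Mazur's isogeny theorem; for general $k$ it follows in principle from Merel's uniform boundedness of torsion, although making $c_1(k)$ effective is already delicate. The split Cartan case over $\Q$ is the work of Bilu--Parent and Bilu--Parent--Rebolledo, and analogous techniques extend to some number fields.

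The genuine obstacle is the non-split Cartan case $X_{\mathrm{ns}}^+(\ell)$. These curves have genus growing with $\ell$ and no obvious elliptic quotients, so neither Faltings (which is ineffective and not uniform in the level) nor classical Chabauty applies directly: one must rule out non-cuspidal non-CM $k$-points for every sufficiently large $\ell$, uniformly. Over $\Q$, the prime $\ell=13$ was handled by the quadratic Chabauty breakthrough of Balakrishnan--Dogra--M\"uller--Tuitman--Vonk, but no uniform method is known even over $\Q$, and over a general number field $k$ this case is essentially untouched. A realistic completion of the proof would require either a uniform effective bound on the Mordell--Weil rank of the Jacobian of $X_{\mathrm{ns}}^+(\ell)$ over $k$ relative to its gonality, or a substantial extension of non-abelian Chabauty--Kim methods to number fields of higher degree; either avenue would by itself constitute a major theorem. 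This is the single step I would expect to absorb essentially all of the work, and it is the reason that Serre's uniformity conjecture, clear in shape, has remained beyond reach for four decades.
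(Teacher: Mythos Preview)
The statement you were asked to prove is labeled in the paper as a \emph{Conjecture} (Serre's uniformity conjecture), not as a theorem, and the paper offers no proof whatsoever---it merely states the conjecture as context for the surrounding discussion of Galois images and intersections of torsion-point images. There is therefore no ``paper's own proof'' to compare your attempt against.

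Your proposal is, appropriately, not a proof but an outline of the standard program: reduce to surjectivity of the mod-$\ell$ image for large $\ell$, classify maximal subgroups via Dickson, and translate each case into a rational-points question on the modular curves $X_0(\ell)$, $X_{\mathrm{sp}}^+(\ell)$, $X_{\mathrm{ns}}^+(\ell)$, and the exceptional curves. You correctly flag that the non-split Cartan case remains the essential obstruction, even over $\Q$, and that no uniform method is currently available. This is an accurate summary of the state of the art, and your explicit caveat that ``the conjecture remains open even over $\Q$'' is exactly right. Since the paper itself makes no attempt at a proof, there is no discrepancy to report; your write-up is a reasonable survey of why the conjecture is still open rather than a proof, and it should be read as such.
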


In particular, for $k= \Q$ he conjectured that for primes $\ell \geq 37$ the
image of $G_k$ surjects onto $\PGL_2(\Z_\ell)$.
Thus, modulo Serre's conjecture, our conjecture holds for curves
defined over $\Q$.

\begin{prop} 
Assume that 
$$
\pi_1( E_1[n])=\pi_2( E_2[m]), \quad n\neq m.
$$
Then $k(E[n])$ contains $\Q(\zeta_d)$, where $d = \mathrm{lcm}(m,n)$, 
the least common multiple of $m,n$.
\end{prop}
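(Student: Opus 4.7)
The plan is to establish that $\Q(\zeta_d)\subseteq k(E_1[n])$ (the symmetric containment $\Q(\zeta_d)\subseteq k(E_2[m])$ would then follow by interchanging roles). The Weil pairing on $E_1[n]$ gives $\zeta_n\in k(E_1[n])$ for free, so the heart of the argument is showing that $\zeta_m$ also lies in $k(E_1[n])$; once both roots of unity are there, $\Q(\zeta_d)=\Q(\zeta_m,\zeta_n)\subseteq k(E_1[n])$.

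To show $\zeta_m\in k(E_1[n])$, I would fix an arbitrary $\sigma\in\Gal(\bar k/k(E_1[n]))$ and prove $\sigma(\zeta_m)=\zeta_m$. Since $\sigma$ fixes every point of $E_1[n]$, it fixes every element of $\pi_1(E_1[n])$ pointwise. The hypothesis $\pi_1(E_1[n])=\pi_2(E_2[m])$ then transfers this to $\pi_2(E_2[m])$: for each $Q\in E_2[m]$ we have $x(\sigma Q)=x(Q)$, which forces $\sigma Q\in\{Q,-Q\}$.

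Next I would show that $\sigma$ acts uniformly as $+I$ or $-I$ on all of $E_2[m]$. Set $H^+=\{Q\in E_2[m]:\sigma Q=Q\}$ and $H^-=\{Q\in E_2[m]:\sigma Q=-Q\}$; both are subgroups of $E_2[m]$, and by the previous paragraph $H^+\cup H^-=E_2[m]$. Since no group can be the union of two proper subgroups (if $h_1\in H^+\setminus H^-$ and $h_2\in H^-\setminus H^+$ existed, then $h_1+h_2$ would lie in neither), one of $H^{\pm}$ equals $E_2[m]$, so $\sigma$ acts on $E_2[m]$ as $+I$ or $-I$. Galois equivariance of the Weil pairing $e_m\colon E_2[m]\times E_2[m]\to\mu_m$ then gives $\sigma(e_m(P,Q))=e_m(\pm P,\pm Q)=e_m(P,Q)^{(\pm1)^2}=e_m(P,Q)$ for all $P,Q$, and since $e_m$ surjects onto $\mu_m$, we conclude $\sigma(\zeta_m)=\zeta_m$. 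As $\sigma\in\Gal(\bar k/k(E_1[n]))$ was arbitrary, $\zeta_m\in k(E_1[n])$ as required.

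The cases $m\le 2$ are trivial since $\Q(\zeta_m)=\Q$, so the substantive argument is needed only for $m\ge 3$. I do not anticipate any serious obstacle; the one mildly delicate point is confirming the union-of-subgroups lemma uniformly, including when $m$ is even and $E_2[2]\subseteq E_2[m]$ produces nontrivial overlap $H^+\cap H^-$. Fortunately the lemma is purely group-theoretic and insensitive to the $2$-torsion structure, so the argument goes through without modification.
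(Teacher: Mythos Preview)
Your argument is correct and in fact considerably more explicit than the paper's own proof, which is a two-line sketch: it records (citing Serre, i.e., the Weil pairing) that $\Q(\zeta_n)\subset k(E_1[n])$ and $\Q(\zeta_m)\subset k(E_2[m])$, together with the fact that each torsion field has index at most $2$ over the common $x$-coordinate field $k(\pi_1(E_1[n]))=k(\pi_2(E_2[m]))$. The paper leaves implicit precisely the step you spell out---why $\zeta_m$ must already lie in $k(E_1[n])$---and your analysis of the Galois action on $E_2[m]$ combined with the equivariance of the Weil pairing is the clean way to supply it.

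One caveat on notation: in this paper $E_r[m]$ denotes the set of points of \emph{exact} order $m$, which is not a subgroup, so your $H^{\pm}$ cannot literally be ``subgroups of $E_2[m]$'' as written, and the hypothesis only tells you that $\sigma Q\in\{Q,-Q\}$ for $Q$ of exact order $m$. The repair is immediate for $m\ge 3$: work inside the full $m$-torsion $(\Z/m\Z)^2$ and choose a basis $P,Q$; since $P$, $Q$, and $P+Q$ all have exact order $m$, the hypothesis gives $\sigma P=\epsilon_1 P$, $\sigma Q=\epsilon_2 Q$, $\sigma(P+Q)=\epsilon_3(P+Q)$ with $\epsilon_i\in\{\pm1\}$, and comparing coefficients forces $\epsilon_1=\epsilon_2=\epsilon_3$. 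Hence $\sigma$ acts as $\pm I$ on all of the $m$-torsion and your Weil-pairing conclusion goes through unchanged; the union-of-two-subgroups lemma is then not needed.
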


\begin{proof} 
By Serre,  we have
$$
\Q(\zeta_n)\subset k(E[n]) \quad \text{ and } \quad 
\Q(\zeta_m)\subset k(E[m])
$$ 
as subfields
of index at most $2$. \end{proof}

\begin{coro}
Assume that $k$ does not contain roots of $1$
of order divisible by $n,m$. Then
$k(E[n]), k(E(m))$ contain a cyclotomic subfield
of index at most $2$.
\end{coro}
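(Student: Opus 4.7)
The plan is to deduce the corollary directly from the preceding Proposition, using the hypothesis on $k$ only to guarantee that the cyclotomic subfield we produce is a genuine proper extension of $k$.

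First, under the standing intersection hypothesis $\pi_1(E_1[n])=\pi_2(E_2[m])$ of the section, the Proposition yields $\Q(\zeta_d)\subset k(E[n])$ with $d=\mathrm{lcm}(n,m)$; exchanging the roles of $n$ and $m$ in the same argument gives $\Q(\zeta_d)\subset k(E[m])$. Composing with $k$, the cyclotomic field $k(\zeta_d)$ sits inside both $k(E[n])$ and $k(E[m])$, and is the candidate cyclotomic subfield for the conclusion.

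Second, I would track the ``index at most $2$'' assertion through the proof of the Proposition. The Weil pairing $e_N\colon E[N]\times E[N]\to \mu_N$ realizes $\Q(\zeta_N)\subset k(E[N])$ for any $N$, but on the $x$-coordinate quotient only the symmetrization $e_N(P,Q)+e_N(P,-Q)=\zeta+\zeta^{-1}$ remains intrinsically defined, since $\iota\colon P\mapsto -P$ identifies $P$ with $-P$. Thus $k(\pi(E[N]))$ always contains the totally real cyclotomic subfield $\Q(\zeta_N)^+$, and $k(E[N])$ is obtained from $k(\pi(E[N]))$ by at most a single quadratic step (adjoining a single $y$-coordinate). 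Applied with $N=d$, this is exactly the factor of two in the claim.

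Finally, the hypothesis that $k$ contains no roots of unity of order divisible by $n$ or $m$ implies $[k(\zeta_d):k]>1$, so the cyclotomic subfield produced above is a genuine proper extension of $k$ inside $k(E[n])$ and $k(E[m])$, rather than collapsing into $k$ itself. The main obstacle is interpretive rather than technical: once one reads ``cyclotomic subfield of index at most $2$'' as the quadratic gap between the full cyclotomic field $\Q(\zeta_d)$ and its totally real subfield $\Q(\zeta_d)^+$ (equivalently, between $k(E[\cdot])$ and $k(\pi(E[\cdot]))$), the corollary becomes a direct specialization of the Proposition.
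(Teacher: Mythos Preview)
Your approach matches the paper's: the Corollary is stated in the paper without a separate proof, as an immediate consequence of the preceding Proposition (whose proof already invokes the containments $\Q(\zeta_n)\subset k(E[n])$, $\Q(\zeta_m)\subset k(E[m])$ ``as subfields of index at most $2$''). Your reading of the index-$2$ statement via the quadratic step $k(\pi(E[N]))\subset k(E[N])$ and the Weil pairing is exactly the mechanism implicit in that proof, and your use of the hypothesis on $k$ to ensure the cyclotomic subfield is nontrivial is the intended point of the Corollary.
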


This provides a strong restriction on intersections of images of torsion points
for elliptic curves over $\Q$, or over more general number fields $k$ with this property.
This yields a restriction on fields $k(E[n])$, 
since $(n,m) > 4$,  for all $(n,m)$ with $J_2(n)= J_2(m)$.

\bibliographystyle{plain}
\bibliography{elliptic}

\end{document}